\documentclass[11pt]{article}

\usepackage[utf8]{inputenc}
\usepackage{amsfonts}
\usepackage{amsmath}
\usepackage{amsthm} 
\usepackage{booktabs}
\usepackage{color}
\usepackage{enumerate}
\usepackage{enumitem}
\usepackage{graphicx}
\usepackage{setspace}
\usepackage{subfigure}
\usepackage{url}



\oddsidemargin=21pt
\evensidemargin=21pt   
\textwidth=435pt
\textheight=580pt
\topmargin=0pt
\headsep=15pt


\newtheorem{theorem}{Theorem}[section]
\newtheorem{algorithm}[theorem]{Algorithm}
\newtheorem{procedure}[theorem]{Procedure}
\newtheorem{lemma}[theorem]{Lemma}
\newtheorem{definition}[theorem]{Definition}

\newtheorem{proposition}[theorem]{Proposition}
\newtheorem{assumption}[theorem]{Assumption}


\numberwithin{equation}{section}


\newcommand{\grad}{\nabla}                       
\newcommand{\inner}[2]{\langle#1,#2\rangle}      
\newcommand{\norm}[1]{\|#1\|}                    
\renewcommand{\Re}{\mathbb{R}}                   
\renewcommand{\S}{\mathbb{S}}                    
\newcommand{\T}{\top\hspace{-1pt}}               
\newcommand{\tr}{\mathrm{tr}}                    
\newcommand{\adjoint}[1]{\mathcal{A}^{*}({#1})}   

\begin{document}


\title{A revised sequential quadratic semidefinite programming
  method for nonlinear semidefinite optimization%
\thanks{This work was supported by the Grant-in-Aid for Early-Career
  Scientists (21K17709) and for Scientific Research (C) (19K11840)
  from Japan Society for the Promotion of Science.}
}

\author{Kosuke Okabe$^\dag$
  \and 
  Yuya Yamakawa$^\dag$
  \and
  Ellen H. Fukuda%
  \thanks{Department of Applied Mathematics and Physics, Graduate
    School of Informatics, Kyoto University, Kyoto \mbox{606--8501},
    Japan (\texttt{okabe@amp.i.kyoto-u.ac.jp}, \texttt{yuya@i.kyoto-u.ac.jp},
    \texttt{ellen@i.kyoto-u.ac.jp}).}
}

\maketitle


\begin{abstract}
  \noindent In 2020, Yamakawa and Okuno proposed a stabilized
  sequential quadratic semi\-definite programming (SQSDP) method for
  solving, in particular, degenerate nonlinear semidefinite
  optimization problems. The algorithm is shown to converge globally
  without a constraint qualification, and it has some nice properties,
  including the feasible subproblems, and their possible inexact
  computations.  In particular, the convergence was established for
  approximate-Karush-Kuhn-Tucker (AKKT) and trace-AKKT conditions,
  which are two sequential optimality conditions for the nonlinear
  conic contexts. However, recently, complementarity-AKKT (CAKKT)
  conditions were also consider, as an alternative to the previous
  mentioned ones, that is more practical. Since few methods are shown
  to converge to CAKKT points, at least in conic optimization, and
  to complete the study associated to the SQSDP, here we propose a
  revised version of the method, maintaining the good properties. We
  modify the previous algorithm, prove the global convergence in the
  sense of CAKKT, and show some preliminary numerical experiments.\\
  
  \noindent \textbf{Keywords:} approximate Karush-Kuhn-Tucker conditions,
  sequential quadratic programming, sequential optimality conditions,
  nonlinear semidefinite programming.
\end{abstract}


\section{Introduction}
The following \emph{nonlinear semidefinite programming} (NSDP)
problem is considered:
\begin{equation}
  \label{eq:nsdp}
  \tag{NSDP}
  \begin{array}{ll}
    \underset{x \in \Re^n}{\mbox{minimize}} & f(x) \\ 
    \mbox{subject to} & g(x) = 0, \: X(x) \in \S^d_+,
  \end{array}
\end{equation}
where $f \colon \Re^n \to \Re$, $g \colon \Re^n \to \Re^m$, and $X
\colon \Re^n \to \S^d$ are twice continuously differentiable
functions, $\S^d$ is the linear space of all real symmetric matrices
of dimension $d \times d$, and $\S^d_+$ is the cone of all positive
semidefinite matrices in $\S^d$. The above problem extends some
well-known optimization problems, including the nonlinear programming
and the linear semidefinite programming (linear SDP).
\par
The research associated to NSDP is currently increasing, with many
possible applications, for instance in control theory~\cite{ANT03},
structural optimization~\cite{KT06,KS04} and finance~\cite{KKW03}. In
particular, methods like the interior point~\cite{Ja00,YY12,YYH12},
the augmented Lagrangian-type~\cite{ANT03,FAN01,FL18,SSZ08}, and the
sequential quadratic programming (SQP) type~\cite{CR04,FNA02,YO20} 
had been proposed in the last two decades (see~\cite{YY15} and references 
therein). Here, we focus in SQP-type methods for NSDP, which basically
consist in solving simpler quadratic SDP problems in each iteration, extending 
the classical SQP method for nonlinear programming. These methods are also called \emph{sequential quadratic semidefinite programming} (SQSDP).
One of the initial SQP-type 
methods for NSDP were proposed by Fares, Noll and Apkarian~\cite{FNA02} 
with a local convergent algorithm, and by Correa and Ram\'irez~\cite{CR04}, 
with a global convergent algorithm. Afterwards, some variants had been also 
proposed in the literature (see, for instance~\cite{KNKF05,ZC16,ZC20}).
\par
In particular, Yamakawa and Okuno~\cite{YO20} recently proposed a
\emph{stabilized SQSDP} method for NSDP, which is an extension of the 
stabilized SQP method for nonlinear programming proposed by Gill and 
Robinson~\cite{Gill2013}. The term ``stabilized'' is used because
stabilized subproblems are solved in each iteration in order to
handle degenerate problems. They proved that all accumulation points of the
sequence generated by the algorithm is either infeasible but stationary
for a feasibility 
problem, or satisfy some necessary optimality conditions. The conditions 
used in their work were actually proposed by Andreani, Haeser, and 
Viana~\cite{Andreani2018} for NSDP problems, and is called
\emph{approximate-Karush-Kuhn-Tucker} (AKKT) and \emph{trace-AKKT}
(TAKKT) conditions.
\par
These conditions are included in a class called \emph{sequential optimality 
conditions}~\cite{AHM11,AMS10}. They are shown to be necessary for optimality 
without any constraint qualification, which is a fact that differs from the 
classical KKT conditions. The sequential optimality conditions are essentially 
KKT variants, designed for building convergence theory of iterative algorithms.
The research related to these conditions in general conic programming context, 
in particular NSDP, are still ongoing.
Some kinds of sequential optimality conditions were proposed for NSDP so far, 
for example AKKT, TAKKT, and \emph{complementarity-AKKT} (CAKKT). 
In particular, CAKKT is a newer sequential optimality and was shown that 
CAKKT implies AKKT (TAKKT), and that CAKKT (or AKKT, TAKKT) with 
Robinson (or Mangasarian-Fromovitz) constraint qualification
imply KKT~\cite{AFHSS19,AFHSS21}.
\par
Moreover, some algorithms were shown to generate points satisfying
these sequential optimality conditions. However, in the conic context,
the few algorithms that were proved to generate, in particular, CAKKT
points are the augmented Lagrangian method~\cite{AFHSS21}, and the
primal-dual interior point method~\cite{AFHSS19}. To the best of authors'
knowledge, there is no SQP-type method producing CAKKT points.
Although Yamakawa and Okuno's~\cite{YO20} stabilized SQSDP method
can generate AKKT or TAKKT points, it is preferable that the method
obtains CAKKT ones, because as we mentioned, CAKKT is a sufficient 
condition under which AKKT (TAKKT) holds. Therefore, we refine the existing 
stabilized SQSDP method~\cite{YO20} so that it can find CAKKT points. Although
the modification in the algorithm is small, the proofs change considerably,
and it becomes necessary to assume the so-called generalized \L{}ojasiewicz
inequality to prove the global convergence.
\par
This paper is organized as follows. In Section~\ref{sec:preliminaries}, we
define some basic notations and review the sequential optimality
conditions for~\ref{eq:nsdp}. In Section~\ref{sec:method}, we describe
our proposed method, including the subproblem structure, the used
merit function, and ways to update the Lagrange multipliers and the
penalty parameters. We also prove the global convergence of the method
in Section~\ref{sec:convergence}, and present some numerical
experiments in Section~\ref{sec:experiments}. Finally, in
Section~\ref{sec:conclusion}, we show some final remarks.


\section{Preliminaries}
\label{sec:preliminaries}

Let us start with some notations that will be used throughout the
paper. The trace of a matrix $Z \in \S^d$ is denoted by $\tr(Z) :=
\sum_{i=1}^d Z_{ii}$. In addition, if $Y \in \S^d$, then the inner
product of $Y$ and $Z$ is written as $\inner{Y}{Z} := \tr (Y Z)$, and
the Frobenius norm of $Z$ is given by $\norm{Z}_F := \inner{Z}{Z}^{1/2}$.
We use the notation $Y \perp Z$ to denote $\inner{Y}{Z} = 0$.
Moreover, the eigenvalues of $Z \in \S^d$ are written as
$\lambda_1(Z) \leq \dots \leq \lambda_d(Z)$, in ascending order.
A positive definite (positive semidefinite) matrix $Z \in \S^d$ is
written as $Z \succ O$ ($Z \succeq O$).
We also define the following operator for all $Z \in \S^d$:
\[
\mathrm{svec}(Z) := (Z_{11}, \sqrt{2}Z_{21}, \dots, \sqrt{2}Z_{d1}, Z_{22},
\sqrt{2}Z_{32}, \dots, \sqrt{2}Z_{d2}, Z_{33}, \dots, Z_{dd})^{\T},
\]
where $\T$ means transpose. The inner product of vectors $x, y \in \Re^n$
is written as $\inner{x}{y} := \sum_{i = i}^{n}x_i y_i$ where $x_i$ and $y_i$
represent the $i$--th entry of $x$ and $y$ respectively, and the Euclidean
norm of $x$ is denoted by $\|x\| := \inner{x}{x}^{1/2}$.
For all $Y,Z \in \S^d$, we note that
\begin{equation}
  \label{eq:svec}
  \inner{Y}{Z} = \mbox{svec}(Y)^\T \mbox{svec}(Z), \quad \mbox{and} \quad
  \norm{Z}_F = \norm{\mbox{svec}(Z)}
\end{equation}
hold. For a given $\rho \colon \Re^n \rightarrow \Re$ and $x \in \Re^n$,
the gradient and the Hessian of $\rho$ at $x$ is written as
$\grad \rho(x)$ and $\grad^2 \rho(x)$, respectively.
If $\tilde{\rho} \colon \Re^n \times \S^d \to \Re$, then its gradient
at $(x,Z) \in \Re^n \times \S^d$ with respect to $x$ is denoted by
$\grad_x \tilde{\rho}(x,{Z})$. 

With the dimensions given by each context, we denote by $I$ the
identity matrix, and we use $e$ as the vector with all entries being
equal to one. Moreover, a diagonal matrix with diagonal entries $r_1,
\dots, r_n \in \Re$ is given by
\begin{equation*}
	\mathrm{diag}\left[r_1, \dots, r_n\right] := \left(
		\begin{array}{ccc}
			r_1 & & O \\
			& \ddots & \\
			O & & r_n
		\end{array}
	\right) \in \Re^{n \times n}.
\end{equation*}
Now, assuming that $Z \in \S^d$ can be diagonalized, we write its
decomposition as $Z = PDP^{\T}$, where $P$ is an orthogonal matrix and
$D$ is defined by $D = \mathrm{diag}\left[\lambda^{P}_{1}(Z), \dots,
  \lambda^{P}_{d}(Z)\right]$. Also, the projection of $Z$ onto
$\S^{d}_{+}$ is given as follows:
\begin{equation*}
  \left[Z\right]_{+} := P\,\mathrm{diag}\left[
    \left[\lambda^{P}_{1}(Z))\right]_{+},
    \dots, \left[\lambda^{P}_{d}(U)\right]_{+}\right]P^{\T},
\end{equation*}
where $\left[\,\cdot\,\right]_{+}\colon \Re \rightarrow \Re$ means
$\left[r\right]_{+} := \mathrm{max}\{0, r\}$. Furthermore, the
cardinality of a set $T$ is written as $\mathrm{card}(T)$.

Finally, let us define some notations related to
problem~\eqref{eq:nsdp}. Let $g := (g_1,\dots,g_m)$ with $g_i \colon
\Re^n \to \Re$ for all $i=1,\dots,m$. The transposed Jacobian matrix
of $g$ at $x$ is denoted by $\grad g(x) \in \Re^{n \times m}$, i.e.,
$\grad g(x) := \left[\grad g_{1}(x), \dots, \grad
  g_{m}(x)\right]$. The matrix $A_{j}(x) \in \S^{d}$ is defined as the
partial derivative $A_{j}(x) := \partial X(x) / \partial x_j $ for all
$j = 1, \dots, n$.  Also, the operator $\mathcal{A}(x)\colon \Re^n
\rightarrow \S^{d}$ and the adjoint operator $\mathcal{A}^{*}(x)\colon
\S^d \rightarrow \Re^n$ are given respectively~as:
\begin{align*}
  \mathcal{A}(x)u & := u_{1}A_{1}(x) + \dots + u_{n}A_{n}(x), \\
  \adjoint{x}U & := \left[\inner{A_{1}(x)}{U}, \dots, \inner{A_{n}(x)}{U}\right]^{\T}.
\end{align*}


\subsection{Optimality conditions}
\label{sec:opt_cond}

Here, we will review the concept of sequential optimality conditions
for NSDP, that was developed in~\cite{Andreani2018}. Let $L \colon \Re^{n} \times \Re^{m} \times
\S_{+}^{d} \rightarrow \Re$ be the Lagrangian function of~\eqref{eq:nsdp}:
\begin{equation}
	\label{eq:lagrange_function}
	L(x, y, Z) := f(x) - \inner{g(x)}{y} - \inner{X(x)}{Z},
\end{equation}
where $y \in \Re^{m}$ and $Z \in \S_{+}^{d}$ are the Lagrange multipliers associated to the equality and the conic constraints, respectively. From the definition~\eqref{eq:lagrange_function}, we observe that the gradient of $L$ with respect to $x$ is given by
\begin{equation}
	\label{eq:lagrange_function_grad}
	\grad_{x}L(x, y, Z) = \grad f(x) - \grad g(x)y - \adjoint{x}Z.
\end{equation}
%

\noindent Thus, we can define the KKT conditions of (\ref{eq:nsdp}) as follows.
\begin{definition}[KKT conditions]
  We say that $(x, y, Z) \in \Re \times \Re^{m} \times \S^{d}_{+}$ satisfies the KKT conditions
  for problem~\eqref{eq:nsdp} if
  \begin{eqnarray*}
		\begin{array}{c}
			\nabla_{x}L(x, y, Z) = 0,\ g(x) = 0, \\
			\inner{X(x)}{Z} = 0,\ X(x) \succeq O,\ Z \succeq O.
		\end{array}
	\end{eqnarray*}
\end{definition}
If $(x, y, Z)$ satisfies the above KKT conditions, then we call $x$ a KKT point, and $(y,Z)$ are the corresponding Lagrange multipliers. Moreover, as it is well known, a local optimal point needs to satisfy some constraint qualification in order to be a KKT point. As alternative optimality conditions, the sequential optimality have been studied in the last decade. They were first introduced for NLP and later developed for more general conic programming, which includes~NSDP \cite{AFHSS19,AHM11,Andreani2018}. Also, these conditions are known to be necessary for optimality without requiring a constraint qualification. In the following, we remark two sequential optimality conditions for NSDP problems, called AKKT and TAKKT.
\begin{definition} \cite[Definition~4]{Andreani2018}
  We say that $x \in \Re^n$ satisfies the AKKT conditions for problem~\eqref{eq:nsdp} if $g(x) = 0, X(x) \succeq O$ and there exist sequences $\left\{x_k\right\} \subset \Re^n, \left\{y_k\right\} \subset \Re^m$ and $\left\{Z_k\right\} \subset \S^{d}_{+}$ such that
  \begin{itemize}
  \item $\displaystyle{\lim_{k \to \infty}} x_k = x$,
  \item $\displaystyle{\lim_{k \to \infty}} \Big(\grad f (x_k)  - \grad g (x_k)y_k - \mathcal{A}^{*}(x_k) Z_k \Big) = 0$,
  \item $\lambda^{U}_{j}\big(X(x)\big) > 0 \Longrightarrow \mbox{there exists } k_{j} \in \mathbb{N}
    \mbox{ such that } \lambda^{U_k}_{j}(Z_k) = 0 \mbox{ for all } k \geq k_j$,\\
    where $U$ and $U_k$ are orthogonal matrices that satisfy $U_k \to U$ when $k \to \infty$, and
    \begin{eqnarray*}
      \begin{array}{c}
        X(x) = U \,\mathrm{diag}\Big[\lambda^{U}_{1}(X(x)), \dots, \lambda^{U}_  {d}(X(x))\Big]U^{\T}, \\[8pt]
        Z_k = U_{k} \,\mathrm{diag}\Big[\lambda^{U_k}_{1}(Z_k), \dots, \lambda^ {U_k}_{d}(Z_k)\Big]U_{k}^{\T}
      \end{array}
    \end{eqnarray*}
    for all $j = 1, \dots, d$.
  \end{itemize}
\end{definition}
\begin{definition} \cite[Definition~5]{Andreani2018}
  We say that $x \in \Re^n$ satisfies the TAKKT (trace-AKKT) conditions for problem~\eqref{eq:nsdp} if $g(x) = 0, X(x) \succeq O$ and there exist sequences $\left\{x_k\right\} \subset \Re^n, \left\{y_k\right\} \subset \Re^m$ and $\left\{Z_k\right\} \subset \S^{d}_{+}$ such that
\begin{itemize}
  \item $\displaystyle{\lim_{k \to \infty}} x_k = x$,
  \item $\displaystyle{\lim_{k \to \infty}} \Big(\grad f(x_k) - \grad g(x_k)y_k - \mathcal{A}^{*}(x_k)Z_k \Big) = 0$,
  \item $\displaystyle{\lim_{k \to \infty}} \inner{X(x_k)}{Z_k} = 0$.
\end{itemize}

\end{definition}
It was proved in~\cite[Theorems~2 and~5]{Andreani2018} that the local minimizers of (\ref{eq:nsdp}) always satisfy the AKKT and the TAKKT. Note also that, differently from the AKKT, the TAKKT avoids computation of eigenvalues. However, concerning the relation between these conditions, there are examples showing that the AKKT and the TAKKT conditions do not imply each other (see~\cite[Example 3]{Andreani2018} and~\cite[Example 3.1]{AFHSS19}). The following CAKKT was proposed as another sequential optimality condition, free of eigenvalue computations, more suitable for the conic context, but having also a clear relationship with both AKKT and TAKKT.
\begin{definition} \cite[Section~2]{AFHSS21}
  We say that $x \in \Re^n$ satisfies the CAKKT (complementarity-AKKT) conditions for problem~\eqref{eq:nsdp} if $g(x) = 0, X(x) \succeq O$ and there exist sequences $\left\{x_k\right\} \subset \Re^n, \left\{y_k\right\} \subset \Re^m$ and $\left\{Z_k\right\} \subset \S^{d}_{+}$ such that
\begin{itemize}
  \item $\displaystyle{\lim_{k \to \infty}} x_k = x$,
  \item $\displaystyle{\lim_{k \to \infty}} \Big(\grad f(x_k) - \grad g(x_k)y_k - \mathcal{A}^{*}(x_k)Z_k \Big) = 0$,
  \item $\displaystyle{\lim_{k \to \infty}} X(x_k) \circ Z_k = O$, \\
    where $\circ$ denotes the Jordan product, i.e., $A \circ B := \left(AB + BA\right)/2$ for all $A, B \in \S^{d}$.
\end{itemize}
\end{definition}
In \cite[Section 3]{AFHSS19} and~\cite[Theorem 2.3]{AFHSS21}, it was shown that the CAKKT implies both AKKT and TAKKT conditions. Furthermore, the CAKKT is equivalent to the KKT conditions when the Mangasarian-Fromovitz constraint qualification holds~\cite[Theorem 3.3]{AFHSS19}. In the next section, we will propose a method that generates these CAKKT sequences, by modifying the SQP-type method proposed in~\cite{YO20}.


\section{The proposed SQSDP method} \label{sec:method}
We describe a brief outline of our SQSDP method. The proposed SQSDP method is based on the SQP--type method developed in~\cite{YO20}, and mainly consists of three steps: solving a subproblem, updating the current point, and updating Lagrange multipliers and parameters. In the following, we provide explanation of each step.

\subsection{The subproblem of the proposed SQSDP method} \label{subsec:subproblem}
Let $k \in \mathbb{N}$ be the current iteration. For a given point $(x_{k}, y_{k}, Z_{k}) \in \Re^{n} \times \Re^{m} \times \S^{d}$, the proposed SQSDP method solves the following subproblem:
\begin{align}
	\label{eq:qsdp_subproblem}
	\begin{array}{cl}
		\underset{(\xi, \Sigma) \in \Re^{n} \times \S^{d}}{\mathrm{minimize}} &
		\inner{\grad f(x_{k}) - \grad g(x_{k})s_{k}}{\xi} + \frac{1}{2}\inner{M_{k} \xi}{\xi} + \frac{\sigma_{k}}{2}\norm{\Sigma}_{\mathrm{F}}^2 \\
		\mathrm{such\ that\ } &
		\mathcal{A}(x_{k})\xi + \sigma_{k} (\Sigma - T_{k}) \succeq O,
	\end{array}
\end{align}
where $\sigma_{k} > 0$ is a penalty parameter and $s_{k}$, $T_{k}$, and $M_{k}$ are defined as follows:
\begin{align*}
s_{k} := y_{k} - \frac{1}{\sigma_{k}}g(x_{k}), ~ T_{k} := Z_{k} - \frac{1}{\sigma_{k}}X(x_{k}), ~ M_{k} := H_{k} + \frac{1}{\sigma_{k}} \grad g(x_{k})\grad g(x_{k})^{\T},
\end{align*}
and $H_{k} \in \Re^{n \times n}$ is the Hessian $\nabla_{xx}^{2} L(x_{k}, y_{k}, Z_{k})$ of the Lagrangian function~\eqref{eq:lagrange_function} or its approximation. Problem~\eqref{eq:qsdp_subproblem} is derived from the following one, which is an extension of the existing stabilized subproblem proposed in~\cite{Wright1998}: 
\begin{align}
	\label{eq:qsdp_primitive}
	\begin{array}{cl}
		\underset{(\xi, \zeta, \Sigma) \in \Re^{n} \times \Re^{m} \times \S^{d}}{\mathrm{minimize}} &
		\inner{\grad f(x_{k})}{\xi} + \frac{1}{2}\inner{H_{k} \xi}{\xi} + \frac{\sigma_{k}}{2}\norm{\zeta}^2 + \frac{\sigma_{k}}{2}\norm{\Sigma}_{\mathrm{F}}^{2} \\
		\mathrm{subject\ to\ } & g(x_{k}) + \grad g(x_{k})^{\T}\xi + \sigma_{k} (\zeta - y_{k}) = 0, \\
		& X(x) + \mathcal{A}(x_{k})\xi + \sigma_{k} (\Sigma - Z_{k}) \succeq O.
	\end{array}
\end{align}
Problem~\eqref{eq:qsdp_subproblem} is obtained by eliminating the variable $\zeta$ in~\eqref{eq:qsdp_primitive} via $\zeta = y_{k} - \frac{1}{\sigma_{k}} ( g(x_{k}) + \nabla g(x_{k})^{\top} \xi_{k} )$ and has various useful properties below~\cite{YO20}:
\begin{itemize}
\item[(i)] It always has a strictly feasible point $(\xi, \Sigma) = (0, I+T_{k})$;
\item[(ii)] if $M_{k} \succ O$, then it has a unique global optimum.
\end{itemize}
Item~(i) implies that \eqref{eq:qsdp_subproblem} is solvable even if the current point $(x_{k}, y_{k}, Z_{k})$ is not sufficiently close to the KKT point of~\eqref{eq:nsdp} and satisfies Slater's constraint qualification. Moreover, these facts and item~(ii) ensure that if the approximate Hessian $H_{k}$ is designed so that $M_{k} \succ O$ at each iteration, then the proposed SQSDP can obtain the KKT point of \eqref{eq:qsdp_subproblem}. Therefore, we suppose that $M_{k} \succ O$ in the subsequent discussion. After obtaining the unique optimum $(\xi_{k}, \Sigma_{k})$ of \eqref{eq:qsdp_subproblem}, we set a search direction $p_{k}$ and trial Lagrange multipliers $\overline{y}_{k+1}$ and $\overline{Z}_{k+1}$ as
\begin{align} \label{def:overline}
p_{k} := \xi_{k}, \quad \overline{y}_{k+1} := y_{k} - \frac{1}{\sigma_{k}} \left( g(x_{k}) + \nabla g(x_{k})^{\top} \xi_{k} \right), \quad \overline{Z}_{k+1} := \Sigma_{k},
\end{align}
respectively.

\subsection{Updating the current iterate} \label{subsec:merit_function}
After computing the search direction $p_{k}$, we consider updating $x_{k}$ along $\xi_{k}$. To decide the step size which indicates how far we update $x_{k}$ along $\xi_{k}$, we introduce the following merit function $F \colon \Re^{n} \rightarrow \Re$:
\begin{align}
  \label{eq:merit_function}
  F(x; \sigma, y, Z) := f(x) + \frac{1}{2\sigma}\norm{\sigma y - g(x)}^2
  + \frac{1}{2\sigma}\norm{\left[\sigma Z - X(x)\right]_{+}}_{\mathrm{F}}^2,
\end{align}
where only $x$ is the variable.  From~\cite[Lemma 5]{Andreani2018},
the merit function $F$ is differentiable on~$\Re^n$ and its gradient
at $x$ is given by
\begin{align}
  \label{eq:grad_merit_function}
  \grad F(x; \sigma, y, Z)
  = \grad f(x) - \grad g(x) \left(y -  \frac{1}{\sigma} g(x) \right) - \adjoint{x} \left[Z - \frac{1}{\sigma} X(x) \right]_{+}.
\end{align}
Furthermore, the merit function $F$ has the following nice property.
\begin{proposition}{\rm \cite[Proposition~2]{YO20}} \label{pro:descent_direction}
Assume that $M_{k} \succ O$. Then, the unique optimum $(\xi_{k}, \Sigma_{k})$ of \eqref{eq:qsdp_subproblem} satisfies $\langle \nabla F(x_{k}; \sigma_{k}, y_{k}, Z_{k}), \xi_{k} \rangle \leq - \langle M_{k} \xi_{k}, \xi_{k} \rangle - \sigma_{k} \Vert \Sigma_{k} - Z_{k} \Vert_{{\rm F}}^{2}$. Moreover, $\nabla F(x_{k}; \sigma_{k}, y_{k}, Z_{k}) = 0$ if and only if $(\xi_{k}, \Sigma_{k}) = (0, [T_{k}]_{+})$.
\end{proposition}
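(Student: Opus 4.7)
The plan is to exploit the KKT conditions of the convex subproblem \eqref{eq:qsdp_subproblem} to obtain an explicit expression for $\grad F(x_k;\sigma_k,y_k,Z_k)$ in terms of $\xi_k$ and $\Sigma_k$, and then to bound $\langle \grad F, \xi_k\rangle$ using primal feasibility, complementary slackness, and the Moreau-type orthogonal decomposition of $T_k$ relative to $\S^d_+$. First I would record the KKT conditions of \eqref{eq:qsdp_subproblem}: since $M_k \succ O$ and the subproblem admits the strictly feasible point $(0, I+T_k)$, it is strongly convex and satisfies Slater's condition, so the unique primal optimum $(\xi_k, \Sigma_k)$ comes with a unique dual multiplier $W_k \in \S^d_+$ for the PSD constraint. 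Stationarity in $\Sigma$ immediately gives $W_k = \Sigma_k$, and stationarity in $\xi$ yields $\grad f(x_k) - \grad g(x_k) s_k + M_k \xi_k - \adjoint{x_k}\Sigma_k = 0$. Substituting this into \eqref{eq:grad_merit_function}, and using $y_k - g(x_k)/\sigma_k = s_k$ together with $Z_k - X(x_k)/\sigma_k = T_k$, produces the key identity
\[
\grad F(x_k;\sigma_k,y_k,Z_k) \;=\; -M_k \xi_k + \adjoint{x_k}\bigl(\Sigma_k - [T_k]_+\bigr).
\]

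Next I would take the inner product with $\xi_k$ and use primal feasibility to substitute $\mathcal{A}(x_k)\xi_k = S_k - \sigma_k(\Sigma_k - T_k)$, where the primal slack $S_k := \mathcal{A}(x_k)\xi_k + \sigma_k(\Sigma_k - T_k)$ lies in $\S^d_+$. Complementary slackness gives $\langle \Sigma_k, S_k\rangle = 0$, while $\langle [T_k]_+, S_k\rangle \geq 0$, so the $S_k$-piece contributes nonpositively. For the remaining piece $-\sigma_k \langle \Sigma_k - [T_k]_+, \Sigma_k - T_k\rangle$, the Moreau-type decomposition $T_k = [T_k]_+ - [-T_k]_+$ together with the orthogonality $\langle [T_k]_+, [-T_k]_+\rangle = 0$ yields
\[
\langle \Sigma_k - [T_k]_+, \Sigma_k - T_k\rangle \;=\; \|\Sigma_k - [T_k]_+\|_F^2 + \langle \Sigma_k, [-T_k]_+\rangle \;\geq\; \|\Sigma_k - [T_k]_+\|_F^2,
\]
using $\Sigma_k \succeq O$ and $[-T_k]_+ \succeq O$ in the last step. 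Collecting the terms delivers a descent bound of the form $\langle \grad F, \xi_k\rangle \leq -\langle M_k \xi_k, \xi_k\rangle - \sigma_k \|\Sigma_k - [T_k]_+\|_F^2$, which implies the inequality stated in the proposition.

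For the equivalence, direct substitution of $(\xi_k, \Sigma_k) = (0, [T_k]_+)$ into the identity for $\grad F$ trivially gives $\grad F = 0$. Conversely, if $\grad F = 0$, the descent bound together with $M_k \succ O$ forces $\xi_k = 0$; the subproblem's KKT system at $\xi_k = 0$ then reduces precisely to the optimality conditions for the Euclidean projection of $T_k$ onto $\S^d_+$ (namely $\Sigma_k \succeq O$, $\Sigma_k - T_k \succeq O$, and $\langle \Sigma_k, \Sigma_k - T_k\rangle = 0$), yielding $\Sigma_k = [T_k]_+$.

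I expect the main obstacle to be the spectral bookkeeping around the projection $[T_k]_+$: combining the orthogonal decomposition $T_k = [T_k]_+ - [-T_k]_+$ with complementary slackness so that a clean squared-norm bound emerges. Once that combinatorial step is handled correctly, the rest of the argument is routine linear algebra and direct substitution into the already-derived identity for $\grad F$.
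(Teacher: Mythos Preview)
The paper does not supply its own proof of this proposition; it merely cites \cite[Proposition~2]{YO20}. Your argument is correct through the derivation of the key identity $\nabla F(x_k;\sigma_k,y_k,Z_k) = -M_k\xi_k + \adjoint{x_k}(\Sigma_k - [T_k]_+)$ and the subsequent bound
\[
\langle \nabla F(x_k;\sigma_k,y_k,Z_k),\xi_k\rangle \;\leq\; -\langle M_k\xi_k,\xi_k\rangle - \sigma_k\|\Sigma_k-[T_k]_+\|_F^2,
\]
and your treatment of the ``if and only if'' part is also fine.

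The genuine gap is the throwaway clause ``which implies the inequality stated in the proposition.'' There is no general relation $\|\Sigma_k-[T_k]_+\|_F \geq \|\Sigma_k-Z_k\|_F$, so you cannot pass from your bound to the one displayed in the paper. In fact the displayed inequality with $Z_k$ is inconsistent with the second half of the very same proposition: when $\nabla F(x_k;\sigma_k,y_k,Z_k)=0$ you (correctly) show that the unique optimum is $(\xi_k,\Sigma_k)=(0,[T_k]_+)$, so the left-hand side of the inequality is $0$, whereas the right-hand side would be $-\sigma_k\|[T_k]_+-Z_k\|_F^2$, which is strictly negative whenever $[T_k]_+\neq Z_k$. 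A one-dimensional instance ($f(x)=X(x)=x$, $m=0$, $H_k=\sigma_k=1$, $x_k=1$, $Z_k=2$) already exhibits this failure. The bound in the paper is thus almost certainly a transcription slip for the $[T_k]_+$ version you proved; for the only use made of the proposition here---that $\xi_k$ is a descent direction of $F$---your $[T_k]_+$ bound is exactly what is needed. You should state and prove that version rather than claim it implies the (incorrect) one printed.
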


Proposition~\ref{pro:descent_direction} guarantees that $p_{k} = \xi_{k}$ is a descent direction of $F$. By applying a backtracking line search with $F$, we determine the step size $\alpha_{k}$, namely, we set $\alpha_{k} := \beta^{\ell_{k}}$, where $\beta \in (0, 1)$ is a positive constant and $\ell_{k} \geq 0$ is the smallest nonnegative integer such~that
\begin{align} \label{stepsize:Armijo}
F(x_{k} + \beta^{\ell_{k}} p_{k}; \sigma_{k}, y_{k}, Z_{k}) \leq F(x_{k}; \sigma_{k}, y_{k}, Z_{k}) + \tau \beta^{\ell_{k}} \Delta_{k},
\end{align}
with $\Delta_{k} := \max \left\{ \inner{\grad F(x_{k}; \sigma_{k}, y_{k}, Z_{k})}{p_{k}}, -\omega \norm{p_{k}}^2 \right\}$, $\omega \in (0, 1)$, and $\tau \in (0, 1)$. Using the step size $\alpha_{k}$, the current point $x_{k}$ is updated by $x_{k+1} := x_{k} + \alpha_{k} p_{k}$.

\subsection{Updating Lagrange multipliers}
\label{subsec:update_lagrange_multiplier}
This section describes a procedure to update Lagrange multipliers. Although the ordinary SQP--type methods would immediately set the new Lagrange multiplier pair $(y_{k+1}, Z_{k+1})$ as $(\overline{y}_{k+1},\overline{Z}_{k+1})$, the proposed SQSDP method does not update them like the ordinary methods and first check whether the triplet $(x_{k+1}, \overline{y}_{k}, \overline{Z}_{k+1})$ is approaching the KKT or CAKKT point. To this end, we define two functions to measure the distance between a given point $(x,y,Z)$ and the KKT point:
\begin{align}
  \label{eq:dist}
  \Phi(x, y, Z) := r_{\mathrm{V}}(x) + \kappa r_{\mathrm{O}}(x, y, Z), \quad
  \Psi(x, y, Z) := \kappa r_{\mathrm{V}}(x) + r_{\mathrm{O}}(x, y, Z),
\end{align}
where $\kappa \in (0, 1)$ and
\begin{align}
  \label{eq:rv_ro}
  r_{\mathrm{V}}(x) := \norm{g(x)} + \left[\lambda_{\mathrm{max}}(-X(x))\right]_{+}, \quad
  r_{\mathrm{O}}(x, y, Z) := \norm{\grad_{x}L(x, y, Z)} + \norm{X(x)Z}_{\mathrm{F}}.
\end{align}
Note that $r_{\mathrm{V}}(x) =  r_{\mathrm{O}}(x, y, Z) = 0$ if and only if $(x, y, Z)$ is the KKT point of \eqref{eq:nsdp}. Moreover, we utilize the function $\Vert \nabla F( \, \cdot \, ; \sigma_{k}, y_{k}, Z_{k}) \Vert$ to measure the distance between a given point $x$ and the CAKKT point. By combining these concepts, we provide the following procedure to update the Lagrange multipliers. This procedure\footnote{The V, O, M and F-iterations mean violation, optimality, merit (function), and failure, respectively.} was first proposed by Gill and Robinson~\cite{Gill2013} and extended for NSDP by Yamakawa and Okuno~\cite{YO20}. We point out that the main difference between this method and our proposal is in the definition of the function $r_O$ (the second definition in~\eqref{eq:rv_ro}), and consequently, in the functions~$\Phi$ and~$\Psi$ given in~\eqref{eq:dist}.
\begin{procedure} \label{algorithm:procedure}
\
\begin{description}

\item[Step 1.]
(V-iterate) If $\Phi(x_{k+1}, \overline{y}_{k+1}, \overline{Z}_{k+1}) \leq \frac{1}{2}\phi_{k}$, then set
\begin{align*}
\phi_{k+1} := \frac{1}{2}\phi_{k},\ \psi_{k+1} := \psi_{k},\ \gamma_{k+1} := \gamma_{k},\ y_{k+1} := \overline{y}_{k+1},\ Z_{k+1} := \overline{Z}_{k+1},
\end{align*}
and end the procedure. Otherwise, go to Step $2$.

\item[Step 2.]
(O-iterate) If $\Psi(x_{k+1}, \overline{y}_{k+1}, \overline{Z}_{k+1}) \leq \frac{1}{2}\psi_{k}$, set
\begin{align*}
\phi_{k+1} := \phi_{k},\ \psi_{k+1} := \frac{1}{2}\psi_{k},\ \gamma_{k+1} := \gamma_{k},\ y_{k+1} := \overline{y}_{k+1},\ Z_{k+1} := \overline{Z}_{k+1},
\end{align*}
and end the procedure. Otherwise, go to Step $3$.

\item[Step 3.]
(M-iterate) If $\norm{\grad F(x_{k+1}; \sigma_k, y_k, Z_k)} \leq \gamma_k$, set
\begin{align*}
\begin{array}{c}
  \phi_{k+1} := \phi_{k}, \quad
  \psi_{k+1} := \psi_{k}, \quad
  \gamma_{k+1} := \frac{1}{2}\gamma_{k}, \\[5pt]
  y_{k+1} := \Pi_{C}\big( y_{k} - \frac{1}{\sigma_{k}}g(x_{k+1}) \big),
  \quad Z_{k+1} := \Pi_{D} \big([Z_{k} - \frac{1}{\sigma_{k}}X(x_{k+1})]_{+} \big),
\end{array}
\end{align*}
and end the procedure. Here, $\Pi_C$ and $\Pi_D$ are projections onto
the sets
\[
C := \left\{y \in \Re^{m} | \, -y_{\max}e \leq y \leq
y_{\max} e \right\}
\quad \mbox{and} \quad D := \left\{ Z \in \S^{d} | \, O \preceq Z
\preceq z_{\max}I \right\},
\]
respectively, and $y_{\max} > 0$, 
$z_{\max} > 0$ are constants. Otherwise, go to Step $4$.

\item[Step 4.]
(F-iterate) Set $\phi_{k+1} := \phi_{k},\ \psi_{k+1} := \psi_{k},\ \gamma_{k+1} := \gamma_{k},\ y_{k+1} := y_k,\ Z_{k+1} := Z_k$.
\end{description}

\end{procedure}

If the conditions $\Phi(x_{k+1}, \overline{y}_{k+1}, \overline{Z}_{k+1}) \leq \frac{1}{2}\phi_{k}$ and $\Psi(x_{k+1}, \overline{y}_{k+1}, \overline{Z}_{k+1}) \leq \frac{1}{2} \psi_{k}$ in Steps~1 and 2 are satisfied, then we consider that $(x_{k+1}, \overline{y}_{k}, \overline{Z}_{k+1})$ is approaching the KKT point. Therefore, we adopt the trial Lagrange multiplier pair $(\overline{y}_{k+1}, \overline{Z}_{k+1})$ as the new Lagrange multiplier pair $(y_{k+1}, Z_{k+1})$ because the pair has a nice tendency. If $\norm{\grad F(x_{k+1}; \sigma_k, y_k, Z_k)} \leq \gamma_k$ in Step~3 is satisfied, then we consider that $x_{k+1}$ is approaching the CAKKT point. In this case, we adopt the updating rule used in the augmented Lagrangian method~\cite{Andreani2018} because the function $F$, which is regarded as the augmented Lagrange function, is minimizing. If all the conditions in Steps 1, 2, and 3 are not satisfied, we do not update the Lagrange multipliers in this iteration.

\subsection{Updating the penalty parameter} \label{subsec:update_penalty_parameter}
This paper considers that \eqref{eq:nsdp} does not satisfy any constraint qualification, and hence it is possible that there is no Lagrange multiplier pair satisfying the KKT conditions. Therefore, we need to design the proposed method so that it can find a CAKKT point. This is done by minimizing the merit function $F$, which is called the augmented Lagrange function, as shown in the next section. Since the case where $\norm{\grad F(x_{k+1}; \sigma_k, y_k, Z_k)} \leq \gamma_k$, seen in Step~3 in Procedure~\ref{algorithm:procedure}, corresponds to the step which minimizes the augmented Lagrangian function, it is reasonable to update the penalty parameter $\sigma_{k}$ based on the manner used in the augmented Lagrangian method as follows:
\begin{align} \label{rule:sigma}
  \sigma_{k+1} :=
  \left\{
  \begin{array}{ll}
    \mathrm{min}\big\{\frac{1}{2}\sigma_k, r(x_{k+1}, y_{k+1}, Z_{k+1})^{\frac{3}{2}}\big\},
    &\mathrm{if}\ \norm{\grad F(x_{k+1}; \sigma_k, y_k, Z_k)} \leq \gamma_k, \\
    \sigma_k, & \mathrm{otherwise},
  \end{array}
  \right.
\end{align}
where
\[
r(x, y, Z) := r_{\mathrm{V}}(x) + r_{\mathrm{O}}(x, y, Z)
\]
and $r_V, r_O$ are defined in~\eqref{eq:rv_ro}. Note that the term
$r(x_{k+1}, y_{k+1}, Z_{k+1})^{\frac{3}{2}}$ has an effect to achieve
fast local convergence and is also utilized in \cite{Gill2013}.

\subsection{Proposed algorithm} \label{subsec:proposed_method}
Summarizing the above discussion, we propose the following method for solving (\ref{eq:nsdp}).
\begin{algorithm} \label{algorithm:proposed}
\
\begin{description}
\item[Step~0.]
Set constants $\tau \in (0, 1), ~ \omega \in (0, 1), ~ \beta \in (0, 1), ~ \kappa \in (0, 1), ~ y_{\max} > 0, ~ z_{\max} > 0$, $k_{\max} \in \mathbb{N}$, and $\varepsilon > 0$. Choose an initial point $(x_0, y_0, Z_0)$ and parameters $\phi_0 > 0, ~ \psi_0 > 0, ~ \gamma_0 > 0, ~ \sigma_0 > 0, ~ k:= 0, ~ \overline{y}_0 := y_0$, and $\overline{Z}_0 := Z_0$. Go to Step~{\rm 1}.

\item[Step~1.]
If $r(x_k, y_k, Z_k) \leq \varepsilon$, $\gamma_k \leq \varepsilon$, or $k = k_{\max}$, then stop. Otherwise, go to Step~{\rm 2}.

\item[Step~2.]
If $\norm{\grad F(x_k; \sigma_k, y_k, Z_k)} = 0$, set
\begin{align*}
  x_{k+1} := x_k, \quad
  \overline{y}_{k+1} := y_k - \frac{1}{\sigma_k}g(x_{k+1}), \quad
  \overline{Z}_{k+1} := \left[Z_k - \frac{1}{\sigma_k}X(x_{k+1})\right]_{+}
\end{align*}
and go to Step~{\rm 5}. Otherwise, go to Step~{\rm 3}.
		
\item[Step~3.]
Choose $H_k \succ O$ and find the global optimum $(\xi_{k}, \Sigma_{k})$ by solving \eqref{eq:qsdp_subproblem}. Set
\begin{align*}
  p_k := \xi_{k}, \quad
  \overline{y}_{k+1} := y_{k} - \frac{1}{\sigma_{k}}
  \left( g(x_{k}) + \grad g(x_{k})^{\T}\xi_{k} \right), \quad
  \overline{Z}_{k+1} := \Sigma_{k}
\end{align*}
and go to Step~{\rm 4}.
		
\item[Step~4.]
Compute the smallest nonnegative integer $\ell_k$ with \eqref{stepsize:Armijo}. Set $x_{k+1} := x_k + \beta^{\ell_k}p_k$ and go to Step~{\rm 5}.

\item[Step~5.]
Compute $y_{k+1},\ Z_{k+1},\ \phi_{k+1},\ \psi_{k+1}$ and $\gamma_{k+1}$ with Procedure~{\rm \ref{algorithm:procedure}} and go to Step~{\rm 6}.
		
\item[Step~6.]
Update $\sigma_{k}$ using \eqref{rule:sigma} and go to Step~{\rm 7}.

\item[Step~7.]
Set $k := k+1$ and go to Step~{\rm 1}.
\end{description}
\end{algorithm}

Note that if $\norm{\grad F(x_k; \sigma_k, y_k, Z_k)} = 0$ holds in Step $2$, then Proposition~\ref{pro:descent_direction} indicates that the unique global minimizer of \eqref{eq:qsdp_subproblem} is $(\xi_{k}, \Sigma_{k}) = (0, [ T_{k} ]_{+})$. Thus, in this case, we do not need to solve the subproblem~\eqref{eq:qsdp_subproblem} and we go to Step~5 immediately.


\section{Global convergence}
\label{sec:convergence}

In this section, we discuss convergence properties of
Algorithm~\ref{algorithm:proposed}, assuming that an infinite sequence
of points are generated. For that, we suppose that the following
assumption holds.
\begin{assumption}
  Let $\{x_k\}$ be a sequence generated by Algorithm~\ref{algorithm:proposed}
  for problem~\eqref{eq:nsdp}. Then, we assume the following assertions:
	\label{assumption:function}
	\begin{description}[leftmargin=22pt]
		\item[(a)] The functions $f,\ g$ and $X$ are twice continuously differentiable.
		\item[(b)] There exists a compact set containing $\{x_k\}$.
		\item[(c)] For all $k$, there exist constants $\nu_1$
                  and $\nu_2$ satisfying
		\begin{eqnarray*}
		  \nu_1 \leq \lambda_{\mathrm{min}}
                  \left(H_k + \frac{1}{\sigma_k}\grad g(x_k)\grad g(x_k)^{\T}\right)
                  \quad \mbox{and} \quad
		  \lambda_{\mathrm{max}}(H_k) \leq \nu_2,
		\end{eqnarray*}
		where $\lambda_{\mathrm{min}}(A)$ and $\lambda_{\mathrm{max}}(A)$ denote, respectively, the minimum and the maximum eigenvalues of $A \in \S^{d}$.
	      \item[(d)] Let $x^{*}$ be an accumulation point of $\left\{x_k\right\}$.
                Then, there exist $\delta > 0$ and a continuous function $\varphi \colon B(x^{*}, \delta)\rightarrow\Re$ satisfying $\lim_{x \to x^{*}}\varphi(x) = 0$ and
		\begin{eqnarray*}
			|P(x)-P(x^{*})| \leq \varphi(x)\norm{\grad P(x)},
		\end{eqnarray*}
		where $B(x^{*}, \delta)$ is the Euclidean ball with radius $\delta$ around $x^{*}$,
                and $P$ is a feasibility measure defined by
                \begin{equation}
                  \label{eq:feasib}
                  P(x) := \frac{1}{2}\norm{g(x)}^2
                  + \frac{1}{2} \norm{\left[-X(x)\right]_{+}}_{\mathrm{F}}^2.
                \end{equation}
	\end{description}
\end{assumption}
In the above assumption, we note that (a) and (b) are standard, and
also used in~\cite{YO20}. The assumption (c) is also used in the same
work, and it holds, for instance, when $H_k$ is positive definite and
bounded~\cite{YO20}. Moreover, (d) is a weak assumption on the
smoothness of~$g(\cdot)$ and $X(\cdot)$, which is defined
in~\cite{AMS10} and known as a generalized \L{}ojasiewicz inequality.

Before showing the main convergence result, let us consider the
following partitions for the algorithm's iterations:
\begin{align*}
  \mathcal{K}_{VO} := &
  \left\{k \mid \text{V-iterate or O-iterate is executed in\ }
  k\text{-th iteration of Procedure\ } \ref{algorithm:procedure} \right\}, \\
  \mathcal{K}_M := & \left\{k \mid \text{M-iterate is executed in\ }
  k\text{-th iteration of Procedure\ } \ref{algorithm:procedure} \right\}, \\
  \mathcal{K}_F := & \left\{k \mid \text{F-iterate is executed in\ }
  k\text{-th iteration of Procedure\ } \ref{algorithm:procedure} \right\}.
\end{align*}
We now give two lemmas, associated with the above set of iterations.
\begin{lemma}
  \label{lemma:cardinality}
  Suppose that Assumption \ref{assumption:function} holds.
  Then, we have:
  \begin{itemize}
  \item[(i)] If $\mathrm{card}(\mathcal{K}_{VO}) = \infty$,
    then $\phi_k \to 0$ or $\psi_k \to 0$ when $k \to \infty$.
  \item[(ii)] If $\mathrm{card}(\mathcal{K}_{VO}) < \infty$,
    then $\left\{Z_k\right\}$ is bounded.
  \item[(iii)] If $\mathrm{card}(\mathcal{K}_{VO}) < \infty$
    and $\mathrm{card}(\mathcal{K}_M) = \infty$,
    then $\sigma_k \to 0$ and $\gamma_k \to 0$ when $k \to \infty$.
  \item[(iv)] The situation $\mathrm{card}(\mathcal{K}_{VO}) < \infty$,
    $\mathrm{card}(\mathcal{K}_M) < \infty$,
    and $\mathrm{card}(\mathcal{K}_F) = \infty$ never occurs.
  \end{itemize}
\end{lemma}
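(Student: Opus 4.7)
Parts (i)--(iii) are essentially bookkeeping on Procedure~\ref{algorithm:procedure}. For (i), $\phi_k$ and $\psi_k$ are non-increasing throughout the run and are halved exactly at a V-iterate and an O-iterate respectively; if $\mathrm{card}(\mathcal{K}_{VO}) = \infty$, at least one of the two iterate types occurs infinitely often, so the corresponding sequence tends to zero. For (ii), let $k_0$ be the index of the last V/O-iterate; for $k > k_0$ an F-iterate leaves $Z_k$ fixed while an M-iterate forces $Z_{k+1}\in D$, so $\|Z_{k+1}\|_{\mathrm{F}}\leq\sqrt{d}\,z_{\max}$, and combined with the finitely many earlier terms this yields boundedness. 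For (iii), the M-iterate condition $\|\nabla F(x_{k+1};\sigma_k,y_k,Z_k)\|\leq\gamma_k$ is precisely the trigger in \eqref{rule:sigma} for $\sigma_{k+1}\leq\tfrac{1}{2}\sigma_k$, while F-iterates freeze both $\sigma_k$ and $\gamma_k$; thus with $\mathrm{card}(\mathcal{K}_{VO})<\infty$ and $\mathrm{card}(\mathcal{K}_M)=\infty$, both non-increasing sequences are halved infinitely often and converge to zero.

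The substantive case is (iv). My plan is a contradiction argument. Suppose that from some $k_0$ onward only F-iterates are executed; since an F-iterate leaves the multipliers and $\phi_k,\psi_k,\gamma_k$ unchanged and, because F requires $\|\nabla F(x_{k+1};\sigma_k,y_k,Z_k)\|>\gamma_k$, also prevents the rule \eqref{rule:sigma} from firing, all of $(y_k,Z_k,\sigma_k,\gamma_k)$ freeze to constants $(\bar y,\bar Z,\bar\sigma,\bar\gamma)$ with $\bar\sigma,\bar\gamma>0$. Writing $\tilde F(\cdot):=F(\cdot;\bar\sigma,\bar y,\bar Z)$, the F-iterate condition gives $\|\nabla\tilde F(x_{k+1})\|>\bar\gamma$ for every $k\geq k_0$. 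I intend to contradict this by showing $\nabla\tilde F(x_k)\to 0$.

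The chain of reasoning is as follows. Proposition~\ref{pro:descent_direction} and Assumption~\ref{assumption:function}(c) yield $\langle\nabla\tilde F(x_k),p_k\rangle\leq-\nu_1\|p_k\|^2$, hence $\Delta_k\leq-\min(\nu_1,\omega)\|p_k\|^2$, so $p_k$ is a uniform descent direction. Assumption~\ref{assumption:function}(b) together with continuity bound $\tilde F$ on $\{x_k\}$, so telescoping \eqref{stepsize:Armijo} produces $\sum_k\alpha_k\|p_k\|^2<\infty$. A classical backtracking-line-search analysis using Lipschitz continuity of $\nabla\tilde F$ on the compact set from (b) then forbids $\alpha_k$ from collapsing whenever $\|p_k\|$ stays away from zero, so $p_k\to 0$. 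Writing the KKT conditions of the convex subproblem~\eqref{eq:qsdp_subproblem} (stationarity in $\Sigma$ identifies the PSD multiplier with $\Sigma_k$ itself) and comparing with \eqref{eq:grad_merit_function} gives the identity $\nabla\tilde F(x_k)=-M_k\xi_k+\mathcal{A}^*(x_k)\bigl(\Sigma_k-[T_k]_+\bigr)$; provided $\Sigma_k-[T_k]_+\to 0$ as $\xi_k=p_k\to 0$, this forces $\nabla\tilde F(x_k)\to 0$, contradicting $\|\nabla\tilde F(x_{k+1})\|>\bar\gamma$.

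The main obstacle I foresee lies in the last link, namely extracting $\Sigma_k-[T_k]_+\to 0$ from $\xi_k\to 0$. The natural tools are the strong convexity of \eqref{eq:qsdp_subproblem} in $\Sigma$ (arising from the $\tfrac{\bar\sigma}{2}\|\Sigma\|_{\mathrm F}^2$ term) together with Proposition~\ref{pro:descent_direction}'s characterization that $(\xi_k,\Sigma_k)=(0,[T_k]_+)$ is equivalent to $\nabla\tilde F(x_k)=0$; a quantitative, continuity-based version of this equivalence is what the argument ultimately needs.
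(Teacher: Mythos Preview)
The paper does not give its own proof of this lemma; it simply cites \cite[Lemma~4 and Theorem~3]{YO20}, relying on the fact that the only change from that reference---the definition of $r_{\mathrm O}$ in~\eqref{eq:rv_ro}---plays no role in the update mechanics of Procedure~\ref{algorithm:procedure} that drive (i)--(iv). Your sketch therefore supplies an argument the paper deliberately omits. Parts (i)--(iii) are correct as written, and your contradiction strategy for (iv) is the standard one and matches what is done in~\cite{YO20}.

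The step you flag as the main obstacle, namely $\Sigma_k-[T_k]_+\to 0$ once $\xi_k\to 0$, closes with a short compactness argument that you have essentially already set up. From the KKT system of~\eqref{eq:qsdp_subproblem} you observed that the conic multiplier equals $\Sigma_k$, so $\Sigma_k\succeq O$, $W_k:=\mathcal A(x_k)\xi_k+\bar\sigma(\Sigma_k-T_k)\succeq O$, and $\langle W_k,\Sigma_k\rangle=0$. Comparing the optimal objective value of~\eqref{eq:qsdp_subproblem} with its value at the feasible point $(0,[T_k]_+)$ bounds $\{\Sigma_k\}$ uniformly (recall $T_k=\bar Z-X(x_k)/\bar\sigma$ is bounded by Assumption~\ref{assumption:function}(b)). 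Along any convergent subsequence $x_k\to x^\star$, $\Sigma_k\to\Sigma^\star$, one has $T_k\to T^\star$ and $W_k\to\bar\sigma(\Sigma^\star-T^\star)$, and the limit relations $\Sigma^\star\succeq O$, $\Sigma^\star-T^\star\succeq O$, $\langle\Sigma^\star-T^\star,\Sigma^\star\rangle=0$ are precisely the variational characterization of the projection onto $\S^d_+$, forcing $\Sigma^\star=[T^\star]_+$. A subsequence-of-subsequence argument then gives $\Sigma_k-[T_k]_+\to 0$ on the whole tail, and your identity $\nabla\tilde F(x_k)=-M_k\xi_k+\mathcal A^*(x_k)(\Sigma_k-[T_k]_+)$, together with the boundedness of $M_k$ and $\mathcal A^*(x_k)$ under the frozen $\bar\sigma$, delivers the contradiction $\nabla\tilde F(x_k)\to 0$ against $\Vert\nabla\tilde F(x_{k+1})\Vert>\bar\gamma$.
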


\begin{proof}
  See~\cite[Lemma~4 and Theorem~3]{YO20}.
\end{proof}

\begin{lemma}
  \label{lemma:convergence}
  Let $\left\{x_k\right\}$ be a sequence generated by
  Algorithm~\ref{algorithm:proposed}, and suppose that Assumption
  \ref{assumption:function} holds. Assume that
  $\mathrm{card}(\mathcal{K}_{VO}) < \infty$ and define
  $\mathcal{K}_M' := \left\{k \in \mathbb{N} \mid k-1 \in
  \mathcal{K}_M\right\}$. Moreover, let $\mathcal{M} \subset
  \mathcal{K}_M'$ such that $x_k \to x^{*}$ when $\mathcal{M} \ni k
  \to \infty$, where $x^{*}$ is a feasible point of~\eqref{eq:nsdp}.
  Then the following statements hold:
  \begin{itemize}
  \item[(i)] $\displaystyle{(1/\sigma_{k-1})\lambda_{i}\big(
    \sigma_{k-1}Z_{k-1} - X(x_k)\big)
    \big[\lambda_{i}(\sigma_{k-1}Z_{k-1} - X(x_k))\big]_{+} \to 0}$
    when $\mathcal{M} \ni k \to \infty$ for all $i = 1, \dots, m$.
  \item[(ii)] $\left\|\grad P(x_k)/\sigma_{k-1}\right\|$ is bounded
    when $k \in \mathcal{M}$.
  \end{itemize}
\end{lemma}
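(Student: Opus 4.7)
The plan is to establish (ii) first and then deduce (i) by combining it with the generalized \L{}ojasiewicz inequality (Assumption~\ref{assumption:function}(d)). Before starting, observe that since $\mathrm{card}(\mathcal{K}_{VO}) < \infty$, Lemma~\ref{lemma:cardinality}(iv) forces $\mathrm{card}(\mathcal{K}_M) = \infty$, whence Lemma~\ref{lemma:cardinality}(iii) yields $\sigma_k \to 0$ and $\gamma_k \to 0$, and Lemma~\ref{lemma:cardinality}(ii) gives $\{Z_k\}$ bounded. After the last V/O-iterate, every subsequent update either leaves $y_k$ fixed (F-iterate) or replaces it by a projection onto $C$ (M-iterate), so $\{y_k\}$ is also eventually bounded by $y_{\max}$. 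For $k \in \mathcal{M}$, the M-iterate test at step $k-1$ gives $\|\nabla F(x_k;\sigma_{k-1},y_{k-1},Z_{k-1})\| \le \gamma_{k-1}$.

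For part (ii), I would expand $\nabla F$ via~\eqref{eq:grad_merit_function} and rearrange to make $\nabla P(x_k)/\sigma_{k-1}$ visible. Using the positive-homogeneity identity $[-X(x)/\sigma]_{+} = \sigma^{-1}[-X(x)]_{+}$ for $\sigma>0$, a short algebraic manipulation produces
\begin{equation*}
\frac{\nabla P(x_k)}{\sigma_{k-1}} = \nabla F(x_k;\sigma_{k-1},y_{k-1},Z_{k-1}) - \nabla f(x_k) + \nabla g(x_k)\, y_{k-1} + \mathcal{A}^*(x_k)\Bigl(\bigl[Z_{k-1} - X(x_k)/\sigma_{k-1}\bigr]_{+} - \bigl[-X(x_k)/\sigma_{k-1}\bigr]_{+}\Bigr).
\end{equation*}
The non-expansivity of the projection onto $\S^d_{+}$ bounds the Frobenius norm of the term in parentheses by $\|Z_{k-1}\|_F$, which is bounded. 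Together with the M-iterate inequality and the boundedness of $\{x_k, y_{k-1}, Z_{k-1}, \gamma_{k-1}\}$ and of $\nabla f, \nabla g, \mathcal{A}^*$ on the compact set from Assumption~\ref{assumption:function}(b), this gives the desired boundedness of $\|\nabla P(x_k)/\sigma_{k-1}\|$ for $k \in \mathcal{M}$.

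For part (i), feasibility of $x^*$ gives $P(x^*)=0$, so Assumption~\ref{assumption:function}(d) yields $P(x_k) \le \varphi(x_k)\|\nabla P(x_k)\|$ for $k$ large along $\mathcal{M}$. Combined with $\|\nabla P(x_k)\| \le C\sigma_{k-1}$ from (ii), this gives $P(x_k)/\sigma_{k-1} \le C\varphi(x_k) \to 0$, and since $\|[-X(x_k)]_{+}\|_F^2 \le 2 P(x_k)$, we get $\|[-X(x_k)]_{+}\|_F^2/\sigma_{k-1} \to 0$. Writing $W_k := \sigma_{k-1} Z_{k-1} - X(x_k)$, the non-expansivity of the projection yields $\|[W_k]_{+} - [-X(x_k)]_{+}\|_F \le \sigma_{k-1}\|Z_{k-1}\|_F$, and $(a+b)^2 \le 2a^2 + 2b^2$ then gives $\|[W_k]_{+}\|_F^2/\sigma_{k-1} \to 0$. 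Since $\|[W_k]_{+}\|_F^2 = \sum_i [\lambda_i(W_k)]_{+}^2$ with nonnegative summands, each quotient $[\lambda_i(W_k)]_{+}^2/\sigma_{k-1} \to 0$, and the identity $\lambda_i(W_k)\,[\lambda_i(W_k)]_{+} = [\lambda_i(W_k)]_{+}^2$ closes the argument.

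The main obstacle is uncovering the hidden identity relating $\nabla P(x_k)/\sigma_{k-1}$ to $\nabla F$ up to bounded correction terms; once that is done, the \L{}ojasiewicz inequality supplies exactly the $P(x_k) = o(\sigma_{k-1})$ rate needed to upgrade $\|[-X(x_k)]_{+}\|_F \to 0$ into the stronger statement (i). A secondary but essential point is verifying that $\{y_{k-1}\}$ remains bounded along $\mathcal{M}$, which rests on $\mathrm{card}(\mathcal{K}_{VO}) < \infty$ combined with the clipping performed in each M-iterate.
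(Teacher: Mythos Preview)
Your proof of~(ii) is essentially the paper's own argument: both add and subtract the ``missing'' terms so that $\nabla F(x_k;\sigma_{k-1},y_{k-1},Z_{k-1})$ appears, bound that piece by $\gamma_{k-1}$ via the M-iterate test, bound $\nabla f(x_k)-\nabla g(x_k)y_{k-1}$ by compactness and the clipping of $y_{k-1}$, and control the residual $\mathcal{A}^*(x_k)\bigl([Z_{k-1}-X(x_k)/\sigma_{k-1}]_{+}-[-X(x_k)/\sigma_{k-1}]_{+}\bigr)$ via the nonexpansiveness of the projection onto~$\S^d_{+}$. The paper presents the same decomposition split into two displayed terms; the content is identical.

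Your proof of~(i), however, is genuinely different and considerably shorter than the paper's. The paper argues by a case split on the sign of $\lambda_i(-X(x^*))$: when it is negative, Weyl's inequality~\eqref{eq:eigenvalue} forces $[\tilde\lambda_i^k]_{+}=0$ eventually; when it is zero, a further split according to whether $\lambda_i(-X(x_k))\le 0$ eventually or $>0$ infinitely often is made, and only in the last subcase are~(ii) and the \L{}ojasiewicz inequality invoked to obtain $\lambda_i(-X(x_k))^2/\sigma_{k-1}\to 0$. You bypass all of this by working with the full Frobenius norm: from~(ii) and Assumption~\ref{assumption:function}(d) you get $\|[-X(x_k)]_{+}\|_F^2/\sigma_{k-1}\to 0$, then the projection nonexpansiveness and $(a+b)^2\le 2a^2+2b^2$ give $\|[W_k]_{+}\|_F^2/\sigma_{k-1}\to 0$, and finally the elementary identity $\lambda[\lambda]_{+}=[\lambda]_{+}^2$ closes the argument for each~$i$ simultaneously. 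This route avoids Weyl's inequality and the three-way case analysis entirely; the paper's approach, by contrast, isolates the eigenvalue-by-eigenvalue behaviour more explicitly, which could be informative in other contexts but is not needed here.
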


\begin{proof}
  See Appendix A.
\end{proof}

Under Assumption \ref{assumption:function}, and supposing also that an
infinite sequence of points are generated by
Algorithm~\ref{algorithm:proposed}, we obtain the following
convergence result. Basically, any accumulation point of the sequence
generated by the method is either a CAKKT point, or it is infeasible
but stationary for the feasibility measure~$P$.

\begin{theorem}
  Let $\left\{x_k\right\}$ be a sequence generated by Algorithm~\ref{algorithm:proposed}, and
  suppose that Assumption \ref{assumption:function} holds. Then, any accumulation point of $\{x_k\}$ satisfies one of the following:
  \begin{itemize}
  \item[(i)] It is a CAKKT point of (\ref{eq:nsdp});
  \item[(ii)] It is an infeasible point of (\ref{eq:nsdp}),
    but a stationary point of the feasibility measure~$P$, defined in~\eqref{eq:feasib}.
  \end{itemize}
\end{theorem}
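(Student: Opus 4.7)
The plan is to case-split on the iteration-type cardinalities provided by Lemma~\ref{lemma:cardinality}. Fix an accumulation point $x^{*}$ with a convergent subsequence $x_k \to x^{*}$ (which exists by Assumption~\ref{assumption:function}(b)), and, by a pigeonhole argument on iteration types together with Lemma~\ref{lemma:cardinality}(iv) forbidding both $\mathcal{K}_{VO}$ and $\mathcal{K}_M$ to be finite, refine the subsequence so that it lies entirely in either $\mathcal{K}_{VO}$ or $\mathcal{K}_M' := \{k : k-1 \in \mathcal{K}_M\}$. Continuity of $f$, $g$, $X$ from Assumption~\ref{assumption:function}(a) is used implicitly throughout.

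\textbf{Case 1 ($\mathcal{K}_{VO}$ infinite along the subsequence).} Lemma~\ref{lemma:cardinality}(i) supplies $\phi_k \to 0$ or $\psi_k \to 0$, and combining the V- or O-iterate acceptance tests with the definitions~\eqref{eq:dist}--\eqref{eq:rv_ro} and $\kappa \in (0,1)$ forces both $r_{\mathrm{V}}(x_{k+1}) \to 0$ and $r_{\mathrm{O}}(x_{k+1}, \overline{y}_{k+1}, \overline{Z}_{k+1}) \to 0$ along the subsequence. This delivers $g(x^{*}) = 0$, $X(x^{*}) \succeq O$, $\nabla_{x} L(x_{k+1}, \overline{y}_{k+1}, \overline{Z}_{k+1}) \to 0$, and $\|X(x_{k+1})\,\overline{Z}_{k+1}\|_{F} \to 0$. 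Since $X$ and $\overline{Z}$ are symmetric, the transpose invariance of the Frobenius norm yields $\|X \circ \overline{Z}\|_{F} \le \tfrac{1}{2}(\|X\overline{Z}\|_{F} + \|\overline{Z}X\|_{F}) = \|X\overline{Z}\|_{F} \to 0$, so $\{(x_{k+1}, \overline{y}_{k+1}, \overline{Z}_{k+1})\}$ is a CAKKT sequence at $x^{*}$ (case (i)).

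\textbf{Case 2 ($\mathcal{K}_{VO}$ finite, $\mathcal{M} \subset \mathcal{K}_M'$).} Parts~(ii)--(iv) of Lemma~\ref{lemma:cardinality} yield $\mathrm{card}(\mathcal{K}_M) = \infty$, $\{Z_k\}$ bounded, $\sigma_k \to 0$, and $\gamma_k \to 0$. Introduce the trial multipliers
\[
\tilde{y}_k := y_{k-1} - \frac{g(x_k)}{\sigma_{k-1}}, \qquad
\tilde{Z}_k := \left[Z_{k-1} - \frac{X(x_k)}{\sigma_{k-1}}\right]_{+},
\]
so that by~\eqref{eq:grad_merit_function} the M-iterate test reads $\|\nabla_{x} L(x_k, \tilde{y}_k, \tilde{Z}_k)\| = \|\nabla F(x_k; \sigma_{k-1}, y_{k-1}, Z_{k-1})\| \le \gamma_{k-1} \to 0$. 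Writing $V_k := \sigma_{k-1} Z_{k-1} - X(x_k)$ (so that $\sigma_{k-1}\tilde{Z}_k = [V_k]_{+}$ and $X(x_k) = \sigma_{k-1} Z_{k-1} - V_k$), a brief spectral calculation based on $V_k[V_k]_{+} = [V_k]_{+}^{2}$ produces
\[
X(x_k)\,\tilde{Z}_k \;=\; Z_{k-1}\,[V_k]_{+} \;-\; \frac{[V_k]_{+}^{2}}{\sigma_{k-1}}.
\]
Lemma~\ref{lemma:convergence}(i) gives $\|[V_k]_{+}\|_{F}^{2}/\sigma_{k-1} \to 0$; multiplying by $\sigma_{k-1} \to 0$ also gives $\|[V_k]_{+}\|_{F} \to 0$, and with $\{Z_{k-1}\}$ bounded both terms vanish, yielding $X(x_k) \circ \tilde{Z}_k \to O$. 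Finally, Lemma~\ref{lemma:convergence}(ii) combined with $\sigma_{k-1} \to 0$ produces $\|\nabla P(x_k)\| \to 0$, and continuity gives $\nabla P(x^{*}) = 0$. Either $P(x^{*}) = 0$, in which case $x^{*}$ is feasible and $\{(x_k, \tilde{y}_k, \tilde{Z}_k)\}_{\mathcal{M}}$ certifies CAKKT (case (i)), or $P(x^{*}) > 0$ and $x^{*}$ is an infeasible stationary point of the feasibility measure $P$ (case (ii)).

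\textbf{Main obstacle.} The crux is the complementarity argument in Case~2. Since $\tilde{Z}_k = [V_k]_{+}/\sigma_{k-1}$ may blow up as $\sigma_{k-1} \to 0$, one cannot conclude $X \circ \tilde{Z} \to O$ by boundedness. The algebraic cancellation of $1/\sigma_{k-1}$ in the displayed identity for $X(x_k)\,\tilde{Z}_k$, together with the $o(\sigma_{k-1})$ bound on $\|[V_k]_{+}\|_{F}^{2}$, is decisive; this scaled complementarity estimate is precisely Lemma~\ref{lemma:convergence}(i), which itself relies on the generalized \L{}ojasiewicz condition in Assumption~\ref{assumption:function}(d). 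A secondary subtlety is the pigeonhole extraction guaranteeing that every accumulation point is reached through either $\mathcal{K}_{VO}$ or $\mathcal{K}_M'$.
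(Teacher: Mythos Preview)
Your proposal is correct and follows essentially the same route as the paper: the same case split on $\mathrm{card}(\mathcal{K}_{VO})$, the same use of Lemma~\ref{lemma:cardinality} for bookkeeping, the same trial multipliers $(\tilde y_k,\tilde Z_k)$, and the same appeal to Lemma~\ref{lemma:convergence} for the delicate complementarity and feasibility-stationarity estimates; your spectral identity $V_k[V_k]_+=[V_k]_+^{2}$ is exactly the paper's diagonalization step written more compactly. One ordering issue to fix: in Case~2 you invoke Lemma~\ref{lemma:convergence}(i) (and derive $X(x_k)\circ\tilde Z_k\to O$) \emph{before} splitting on feasibility of $x^{*}$, but that lemma is stated only for feasible $x^{*}$ and its proof genuinely uses $X(x^{*})\succeq O$; you should, as the paper does, branch first and apply part~(i) only inside the feasible subcase (the use of part~(ii) in the infeasible subcase matches the paper, whose proof of~(ii) does not actually use feasibility).
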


\begin{proof}
  We consider two cases:
  (a)\ $\mathrm{card}(\mathcal{K}_{VO}) = \infty$, and
  (b)\ $\mathrm{card}(\mathcal{K}_{VO}) < \infty$. \\

  \noindent \textbf{Case (a)} Assume that
  $\mathrm{card}(\mathcal{K}_{VO}) = \infty$.  We show that
  statement ($i$) holds in this case.  Let $\mathcal{K}_{VO}' :=
  \left\{k \in \mathbb{N} \mid k-1 \in \mathcal{K}_{VO}\right\}$. It
  is clear that $\mathrm{card}(\mathcal{K}_{VO}') = \infty$ because
  $\mathrm{card}(\mathcal{K}_{VO}) = \infty$. Using this fact and
  Assumption \ref{assumption:function} (b), $\left\{x_k\right\}_{k \in
    \mathcal{K}_{KO}'}$ has an accumulation point, say $x^{*}$.  Then,
  there exists $\mathcal{J} \subset \mathcal{K}_{KO}'$ such that $x_k
  \to x^{*}$ when $\mathcal{J} \ni k \to
  \infty$. From~\eqref{eq:dist}, Lemma~\ref{lemma:cardinality}~($i$),
  and the steps in Procedure~\ref{algorithm:procedure}, it follows
  that $r_{\mathrm{V}}(x_k) \to 0$ and $r_{\mathrm{O}}(x_k, y_k, Z_k)
  \to 0$ when $\mathcal{J} \ni k \to \infty$.  Therefore, from the
  definition of $r_{\mathrm{V}}$ and $r_{\mathrm{O}}$ given
  in~\eqref{eq:rv_ro}, we conclude that
  \begin{align*}
    \underset{\mathcal{J} \ni k \to \infty}{\mathrm{lim}}
    \Big(\grad f(x_k) - \grad g(x_k)y_k - \adjoint{x_k}Z_k\Big) = 0, \\
    \underset{\mathcal{J} \ni k \to \infty}{\mathrm{lim}} \norm{X(x_k)Z_{k}}_{\mathrm{F}}
    = \underset{\mathcal{J} \ni k \to \infty}{\mathrm{lim}} \norm{Z_{k}X(x_k)}_{\mathrm{F}} = 0, \\
    \underset{\mathcal{J} \ni k \to \infty}{\mathrm{lim}} g(x_k) = g(x^{*}) = 0, \\
    \underset{\mathcal{J} \ni k \to \infty}{\mathrm{lim}}
    \left[\lambda_{\mathrm{max}}(-X(x_k))\right]_{+}
    = \left[\lambda_{\mathrm{max}}(-X(x^{*}))\right]_{+} = 0,
  \end{align*}
  which show that $x^{*}$ is a CAKKT point of (\ref{eq:nsdp}). \\

  \noindent \textbf{Case (b)} Assume that $\mathrm{card}(\mathcal{K}_{VO}) < \infty$. 
  To begin with, we show that the gradient of the Lagrange function
  converge to $0$ in some subset of $\mathcal{K}_M$. Recalling that an
  infinite sequence of iterates are generated, it follows from Lemma
  \ref{lemma:cardinality} ($iv$) that $\mathrm{card}(\mathcal{K}_M) =
  \infty$. This shows that $\mathrm{card}(\mathcal{K}_M') = \infty$,
  where $\mathcal{K}_M' := \left\{k \in \mathbb{N} \mid k-1 \in
  \mathcal{K}_M\right\}$. Once again by Assumption
  \ref{assumption:function} (b), $\left\{x_k\right\}_{k \in
    \mathcal{K}_M'}$ has at least one accumulation point. Let $x^{*}$
  be such a point. Thus, there exists $\mathcal{J} \subset
  \mathcal{K}_M'$ such that $x_k \to x^{*}$ when $\mathcal{J} \ni k
  \to \infty$. Moreover, from Lemma \ref{lemma:cardinality}
  $(ii)$--$(iii)$, we have the boundedness of $\left\{Z_k\right\}$,
  and that $\sigma_k \to 0$ and $\gamma_k \to 0$ when $k \to \infty$.
  Defining for simplicity the terms below,
  \[
  \tilde{y}_k := y_{k-1} - \frac{1}{\sigma_{k-1}}g(x_k), \quad
  \tilde{Z}_k := \left[Z_{k-1} - \frac{1}{\sigma_{k-1}}X(x_k)\right]_{+},
  \]
  and recalling~\eqref{eq:grad_merit_function}, we obtain
  \begin{eqnarray*}
    \norm{\grad f(x_k) - \grad g(x_k)\tilde{y}_k - \adjoint{x_k}\tilde{Z}_k}
    = \norm{\grad F(x_{k}; \sigma_{k-1}, y_{k-1}, Z_{k-1})} \leq \gamma_{k-1},
  \end{eqnarray*}
  when $k \in \mathcal{J}$. These facts and Lemma \ref{lemma:cardinality} $(iii)$ yield 
  \begin{eqnarray}
    \label{eq:lagrange_gradient_converge_to_zero}
    \lim_{\mathcal{J} \ni k \to \infty} \Big(\grad f(x_k) - \grad g(x_k)\tilde{y}_k
    - \adjoint{x_k}\tilde{Z}_k\Big) = 0.
  \end{eqnarray}
  %
  Now, we consider two cases: (b1) when $x^{*}$ is feasible for (\ref{eq:nsdp}),
  and (b2) when $x^{*}$ is an infeasible point of (\ref{eq:nsdp}).  We
  will show that ($i$) holds in the first case, and ($ii$) is satisfied
  in the latter case.\\

  \noindent \textbf{Case (b1)} Let us show that situation ($i$) holds
  when $x^{*}$ is feasible for (\ref{eq:nsdp}). From the definition of
  CAKKT, as well as~(\ref{eq:lagrange_gradient_converge_to_zero}), it is
  sufficient to show that $X(x_k) \circ \tilde{Z}_k \to O$ when $k \to
  \infty$. Moreover, it is sufficient to prove that $X(x_k)\tilde{Z}_k
  \to O$ when $k \to \infty$ since $\norm{X(x_k)
    \tilde{Z}_k}_{\mathrm{F}} = \norm{\tilde{Z}_k
    X(x_k)}_{\mathrm{F}}$.

  Consider the diagonalization $\sigma_{k-1}Z_{k-1} - X(x_k) =
  S_{k}D_{k}S_{k}^{\T}$, where $S_k$ is an orthogonal matrix. Hence, simple
  calculations show that
  \begin{align*}
    X(x_k)\tilde{Z}_k
    = & \; (\sigma_{k-1}Z_{k-1} - S_{k}D_{k}S_{k}^{\T})
    \left[Z_{k-1} - \frac{1}{\sigma_{k-1}}X(x_k)\right]_{+} \\
    = & \; \sigma_{k-1}Z_{k-1}
    \left[Z_{k-1} - \frac{1}{\sigma_{k-1}}X(x_k)\right]_{+} - S_{k}D_{k}S_{k}^{\T}
    \left[Z_{k-1} - \frac{1}{\sigma_{k-1}}X(x_k)\right]_{+} \\
    = & \; \sigma_{k-1}Z_{k-1}
    \left[Z_{k-1} - \frac{1}{\sigma_{k-1}}X(x_k)\right]_{+} - S_{k}D_{k}S_{k}^{\T}
    \left[\frac{1}{\sigma_{k-1}}S_{k}D_{k}S_{k}^{\T}\right]_{+} \\
    = & \; \sigma_{k-1}Z_{k-1}
    \left[Z_{k-1} - \frac{1}{\sigma_{k-1}}X(x_k)\right]_{+} - \frac{1}{\sigma_{k-1}}S_{k}D_{k}
    \left[D_{k}\right]_{+}S_{k}^{\T}.
  \end{align*}
  We now prove that
  $\sigma_{k-1}Z_{k-1}\left[Z_{k-1} - X(x_k)/\sigma_{k-1}\right]_{+}$
  and $(1/\sigma_{k-1})S_{k}D_{k}\left[D_{k}\right]_{+}S_{k}^{\T}$
  converge to $O$ when $k \to \infty$.
  The first term clearly converges to $O$ because $x^{*}$ is a feasible point of
  (\ref{eq:nsdp}), namely, $X(x^{*}) \succeq O$ and
  Lemma \ref{lemma:cardinality} $(ii)$--$(iii)$ hold.
  To show that the same can be said for the second term, observe that
  \begin{eqnarray*}
    \frac{1}{\sigma_{k-1}}S_{k}D_{k}\left[D_{k}\right]_{+}S_{k}^{\T}
    = \frac{1}{\sigma_{k-1}}\sum_{i=1}^{m}\lambda_{i}(\sigma_{k-1}Z_{k-1} - X(x_k))
    \left[\lambda_{i}(\sigma_{k-1}Z_{k-1} - X(x_k))\right]_{+}s_{i}^{k}\left[s_{i}^{k}\right]^{\T},
  \end{eqnarray*}
  where $s_{i}^{k}$ denotes the $i$-th column of $S_k$.
  From Lemma \ref{lemma:convergence}~($i$), we have
  \[
  \frac{1}{\sigma_{k-1}}\lambda_{i}(\sigma_{k-1}Z_{k-1} - X(x_k))
  \left[\lambda_{i}(\sigma_{k-1}Z_{k-1} - X(x_k))\right]_{+} \to 0,\ i = 1, \dots, m,
  \]
  which proves our claim. Therefore, we conclude that $x^{*}$ is a CAKKT point.\\

  \noindent \textbf{Case (b2)} Finally, we show that condition
  ($ii$) holds if $x^{*}$ is an infeasible point of (\ref{eq:nsdp}).
  Since $x_k \to x^{*}$ when $k \to \infty$, there exists $\delta > 0$
  and $\overline{k} \in \Re$ such that $x_{k} \in B(x^{*}, \delta)$
  for all $k \geq \overline{k}$. In the following, let $k \geq
  \overline{k}$. From the definition of $P$ given
  in~\eqref{eq:feasib}, we have
  \[
  \grad P(x) = \grad g(x)g(x) - \adjoint{x}\left[-X(x)\right]_{+}.
  \]
  From Lemma~\ref{lemma:convergence}~($ii$), there exists a positive
  constant $M$ such that $\norm{\grad P(x_k)/\sigma_{k-1}} \leq M$. Also, by
  Lemma \ref{lemma:cardinality} ($iii$), we have $\sigma_{k} \to 0$
  and $\norm{\grad P(x_k)} \to \norm{\grad P(x^{*})} = 0$ when $k \to
  \infty$. Therefore, condition ($ii$) holds if $x^{*}$ is an
  infeasible point of (\ref{eq:nsdp}).
\end{proof}


\section{Numerical experiments}
\label{sec:experiments}

In this section, we present some simple numerical experiments to check
the validity of Algorithm~\ref{algorithm:proposed}. We consider two
NSDP problems here: one with no KKT solution and another where
degeneracy occurs. The program was implemented in MATLAB R2020b, and
we ran Algorithm~\ref{algorithm:proposed} on a machine with Intel Core
i$9$-$9900$K, with $3.60$GHz of CPU and $128$GB of RAM. The subproblem
used in the algorithm is computed using SDPT3, version
4.0~\cite{TTT99,TTT03}.

Let us first describe the setting of
Algorithm~\ref{algorithm:proposed}.  The initial point was set as
$(x_{0}, y_{0}, Z_{0}) = (0,0,O)$.  For the stopping criteria, we use
the following three conditions:
\begin{eqnarray*}
  \begin{array}{c}
    r(x_k, y_k, Z_k) := r_{\mathrm{V}}(x_k) + r_{\mathrm{O}}(x_k, y_k, Z_k) \leq 10^{-4},
    \quad \gamma_k \leq 10^{-4}, \quad \mbox{or} \quad k_{\max} = 200.
  \end{array}
\end{eqnarray*}
As usual, the condition $\norm{\grad F(x_k; \sigma_k, y_k, Z_k)} = 0$
in Step~$2$ of Algorithm~\ref{algorithm:proposed} is relaxed to
$\norm{\grad F(x_k; \sigma_k, y_k, Z_k)} \leq 10^{-4}$. Moreover, the
constants and parameters were set as follows: $\tau := 10^{-4}$,
$\omega := 10^{-4}$, $\beta := 0.5$, $\kappa := 10^{-5}$,
$y_{\mathrm{max}} := 10^6$, $z_{\mathrm{max}} := 10^6$, $\epsilon :=
10^{-4}$, $\phi_0 := 10^3$, $\psi_0 := 10^3$, $\gamma_0 := 10^{-1}$,
$\sigma_0 := 10^{-1}$. We also set the stopping condition's parameter
for the subproblem (``gaptol'' in SDPT3) as $10^{-10}$.


\subsection{Problem with no KKT solution}

We consider the following one-dimensional problem:
\begin{eqnarray*}
  \begin{array}{ll}
    \underset{x \in \Re}{\mathrm{minimize}} & 2x \\
    \mathrm{subject\ to\ } &
    \left(\begin{array}{cc}
      0 & -x \\
      -x & 1
    \end{array}\right) \succeq O.
  \end{array}
\end{eqnarray*}
The unique feasible point, which is also optimal for this problem is
$x = 0$.  However, it is not a KKT point, as proved
in~\cite[Example~1]{Andreani2018}. For this problem,
Algorithm~\ref{algorithm:proposed} was able to obtain the solution at
the $35$-th iteration and the value of the measure $r$ was equal to
$9.98 \times 10^{-5}$.  Although this is a toy problem, we can see
that the absence of KKT does not affect the proposed SQSDP method, at
least in this example.


\subsection{Degenerated problem}
Next, we solve the following degenerated problem:
\begin{eqnarray*}
  \begin{array}{ll}
    \underset{X \in \S^n}{\mathrm{minimize}} & \langle C, X \rangle \\
    \mathrm{such\ that\ } & X_{ii} = 1, \quad i = 1, \dots , n, \\
    & \inner{J}{X} = 0,\ X \succeq O,
  \end{array}
\end{eqnarray*}
where $J$ is an $n$-dimensional square matrix with all elements equal to~$1$,
and $C$ is an $n$-dimensional symmetric matrix whose elements satisfy $C_{ij} \in [-1, 1]$
for all $i, j = 1, \dots, n$. Since $e^{\T}Xe = \inner{J}{X} = 0$, there is no strictly feasible point
for this problem, which means that Slater's constraint qualification does not hold.
\par
Here, we generated $10$ instances of problems with $n = 5$ and $n = 10$, and solved them by using
Algorithm~\ref{algorithm:proposed}. Table~\ref{table:problem2} shows its computational results,
where ``Average iterations'' means the average iterations spent by Algorithm~\ref{algorithm:proposed}.
In addition, ``Average $r$'', ``Maximum $r$'' and ``Minimum $r$'' represent, respectively, the average,
maximum, and minimum values of the measure $r$ at the final iteration. As it is possible to observe in the table, in most problems, the proposed algorithm was able to find a solution.
\begin{table}[htb]
	\begin{center}
	\caption{Performance for the degenerated problem}
	\label{table:problem2}
	\begin{tabular}{|c|c|c|} \hline
		& $n = 5$ & $n = 10$ \\ \hline \hline
		Average iterations & $183.6$ & $166.9$	\\ \hline
		Average $r$ & $2.45 \times 10^{-3}$ & $7.01 \times 10^{-3}$ \\ \hline
		Maximum $r$ & $1.50 \times 10^{-2}$ & $6.67 \times 10^{-2}$ \\ \hline
		Minimum $r$ & $6.09 \times 10^{-5}$ & $8.95 \times 10^{-5}$ \\ \hline
	\end{tabular}
	\end{center}
\end{table}


\section{Conclusion}
\label{sec:conclusion}
In this paper, we proposed a revised sequential quadratic semidefinite
programming method for NSDP. The algorithm is based on a recent work
of Yamakawa and Okuno~\cite{YO20}, and the main difference is that now
we are able to generate CAKKT points. By adding a not strict
assumption, called generalized \L{}ojasiewicz inequality, we also
proved the global convergence of the method. Simple numerical
experiments were also done, showing the validity of the method. One
future work will be to analyze the convergence rate of the method.




\bibliographystyle{plain}
\bibliography{journal-titles,references}

\begin{thebibliography}{10}

\bibitem{AFHSS19}
R.~Andreani, E.~H. Fukuda, G.~Haeser, D.~O. Santos, and Secchin.~L. D.
\newblock Optimality conditions for nonlinear second-order cone programming and
  symmetric cone programming.
\newblock {\em Submitted}, 2019.

\bibitem{AFHSS21}
R.~Andreani, E.~H. Fukuda, G.~Haeser, D.~O. Santos, and Secchin.~L. D.
\newblock On the use of {J}ordan algebras for improving global convergence of
  an augmented {L}agrangian method in nonlinear semidefinite programming.
\newblock {\em Computational Optimization and Applications}, 79:633--648, 2021.

\bibitem{AHM11}
R.~Andreani, G.~Haeser, and J.~M. Mart\'inez.
\newblock On sequencial optimality conditions for smooth constrained
  optimization.
\newblock {\em Optimization}, 60(5):627--641, 2011.

\bibitem{Andreani2018}
R.~Andreani, G.~Haeser, and D.~S. Viana.
\newblock Optimality conditions and global convergence for nonlinear
  semidefinite programming.
\newblock {\em Mathematical Programming}, 180:203--235, 2018.

\bibitem{AMS10}
R.~Andreani, J.~M. Mart\'inez, and B.~F. Svaiter.
\newblock A new sequential optimality condition for constrained optimization
  and algorithmic consequences.
\newblock {\em SIAM Journal on Optimization}, 20:3533--3554, 2010.

\bibitem{ANT03}
P.~Apkarian, D.~Noll, and H.~D. Tuan.
\newblock Fixed-order ${H}_\infty$ control design via a partially augmented
  {L}agrangian method.
\newblock {\em International Journal of Robust and Nonlinear Control},
  13(12):1137--1148, 2003.

\bibitem{CR04}
R.~Correa and H.~Ram\'irez~C.
\newblock A global algorithm for nonlinear semidefinite programming.
\newblock {\em SIAM Journal on Optimization}, 15(1):303--318, 2004.

\bibitem{FAN01}
B.~Fares, P.~Apkarian, and D.~Noll.
\newblock An augmented {L}agrangian method for a class of {LMI}-constrained
  problems in robust control theory.
\newblock {\em International Journal of Control}, 74(4):348--360, 2001.

\bibitem{FNA02}
B.~Fares, D.~Noll, and P.~Apkarian.
\newblock Robust control via sequential semidefinite programming.
\newblock {\em SIAM Journal on Control and Optimization}, 40(6):1791--1820,
  2002.

\bibitem{FL18}
E.~H. Fukuda and B.~F. Louren\c{c}o.
\newblock Exact augmented {L}agrangian functions for nonlinear semidefinite
  programming.
\newblock {\em Computational Optimization and Applications}, 71(2):457--482,
  2018.

\bibitem{Gill2013}
P.~E. Gill and D.~P. Robinson.
\newblock A globally convergent stabilized {SQP} method.
\newblock {\em SIAM Journal on Optimization}, 23(4):1983--2010, 2013.

\bibitem{HJ91}
R.~A. Horn and C.~R. Johnson.
\newblock {\em Topics in Matrix Analysis}.
\newblock Cambridge University Press, Cambridge, 1991.

\bibitem{Ja00}
F.~Jarre.
\newblock An interior method for nonconvex semidefinite programs.
\newblock {\em Optimization and Engineering}, 1(4):347--372, 2000.

\bibitem{KT06}
Y.~Kanno and I.~Takewaki.
\newblock Sequential semidefinite program for maximum robustness design of
  structures under load uncertainty.
\newblock {\em Journal of Optimization Theory and Applications},
  130(2):265--287, 2006.

\bibitem{KNKF05}
C.~Kanzow, C.~Nagel, H.~Kato, and M.~Fukushima.
\newblock Successive linearization methods for nonlinear semidefinite programs.
\newblock {\em Computational Optimization and Applications}, 31(3):251--273,
  2005.

\bibitem{KKW03}
H.~Konno, N.~Kawadai, and D.~Wu.
\newblock Estimation of failure probability using semi-definite logit model.
\newblock {\em Computational Management Science}, 1(1):59--73, 2003.

\bibitem{KS04}
M.~Ko\v{c}vara and M.~Stingl.
\newblock Solving nonconvex {SDP} problems of structural optimization with
  stability control.
\newblock {\em Optimization Methods \& Software}, 19(5):595--609, 2004.

\bibitem{SSZ08}
D.~Sun, J.~Sun, and L.~Zhang.
\newblock The rate of convergence of the augmented {L}agrangian method for
  nonlinear semidefinite programming.
\newblock {\em Mathematical Programming}, 114(2):349--391, 2008.

\bibitem{TTT99}
K.~C. Toh, M.~J. Todd, and R.~H. Tutuncu.
\newblock {SDPT}3 -- a {M}atlab software package for semidefinite programming.
\newblock {\em Optimization Methods \& Software}, 11:545--581, 1999.

\bibitem{TTT03}
R.~H. Tutuncu, K.~C. Toh, and M.~J. Todd.
\newblock Solving semidefinite-quadratic-linear programs using {SDPT}3.
\newblock {\em Mathematical Programming}, 95:189--217, 2003.

\bibitem{Wright1998}
S.~J. Wright.
\newblock Superlinear convergence of a stabilized {SQP} method to a degenerate
  solution.
\newblock {\em Computational Optimization and Applications}, 11(3):253–275,
  1998.

\bibitem{YO20}
Y.~Yamakawa and T.~Okuno.
\newblock A stabilized sequential quadratic semidefinite programming method for
  degenerate nonlinear semidefinite programs.
\newblock {\em Submitted}, 2020.

\bibitem{YY12}
H.~Yamashita and H.~Yabe.
\newblock Local and superlinear convergence of a primal-dual interior point
  method for nonlinear semidefinite programming.
\newblock {\em Mathematical Programming}, 132(1-2):1--30, 2012.

\bibitem{YY15}
H.~Yamashita and H.~Yabe.
\newblock A survey of numerical methods for nonlinear semidefinite programming.
\newblock {\em Journal of the Operations Research Society of Japan},
  58(1):24--60, 2015.

\bibitem{YYH12}
H.~Yamashita, H.~Yabe, and K.~Harada.
\newblock A primal-dual interior point method for nonlinear semidefinite
  programming.
\newblock {\em Mathematical Programming}, 135(1-2):89--121, 2012.

\bibitem{ZC16}
Q.~Zhao and Z.~Chen.
\newblock On the superlinear local convergence of a penalty-free method for
  nonlinear semidefinite programming.
\newblock {\em Journal of Computational and Applied Mathematics},
  308(15):1--19, 2016.

\bibitem{ZC20}
Q.~Zhao and Z.~Chen.
\newblock A line search exact penalty method for nonlinear semidefinite
  programming.
\newblock {\em Computational Optimization and Applications}, 75:467--491, 2020.

\end{thebibliography}


\section*{Appendix A}

Here, we give a proof for Lemma~\ref{lemma:convergence}. Before that,
we state the following useful result concerning
eigenvalues~\cite{HJ91}. If $A, B \in \S^{m}$, then for all $i = 1,
\dots, m$, we have
\begin{equation}
  \label{eq:eigenvalue}
  \lambda_{1}(A) + \lambda_{i}(B) \leq
  \lambda_{i}(A+B) \leq
  \lambda_{m}(A) + \lambda_{i}(B).
\end{equation}

\noindent \textbf{Proof of Lemma~\ref{lemma:convergence} (i):}\\

\noindent For simplicity, let us define
\[
\tilde{\lambda}_i^k := \lambda_i \big( \sigma_{k-1} Z_{k-1} - X(x_k) \big).
\]
We will prove that $(1/\sigma_{k-1}) \tilde{\lambda}_i^k
[\tilde{\lambda}_i^k]_+ \to 0$ by considering two cases, when $i \in
\{1,\dots,m\}$ satisfies (a) $\lambda_{i}(-X(x^{*})) < 0$ or (b)
$\lambda_{i}(-X(x^{*})) = 0$. \\
  
\noindent \textbf{Case (a)} Let $i \in \{1,\dots,m \}$ be such that
$\lambda_{i}(-X(x^{*})) < 0$.\\

\noindent First, recalling that $\sigma_{k-1} > 0$, it follows from
the second inequality of~\eqref{eq:eigenvalue} that
\begin{equation}
  \label{eq:inequality_of_eigenvalue}
  \displaystyle{\frac{1}{\sigma_{k-1}}\tilde{\lambda}_i^k}
  \leq \displaystyle{\lambda_{m}(Z_{k-1})
    + \lambda_{i}\left(-\frac{1}{\sigma_{k-1}}X(x_k)\right)}.
\end{equation}
From the assumption that $\lambda_{i}(-X(x^{*})) < 0$, there exists a
positive integer $\overline{k}$ such that $\lambda_{i}(-X(x_k)) < 0$
for all $k \geq \overline{k}$.  In the following, we suppose that $k
\geq \overline{k}$.  We obtain from Lemma \ref{lemma:cardinality}
$(ii)$--$(iii)$ that $\{Z_{k-1}\}$ is bounded and $\sigma_{k-1} \to 0$ when
$k \to \infty$. Thus, we have
\[
\lambda_{m}(Z_{k-1}) + \lambda_{i}\left(-\frac{1}{\sigma_{k-1}}X(x_k)\right) \to -\infty.
\]
This, together with~\eqref{eq:inequality_of_eigenvalue} shows that
$(1/\sigma_{k-1})\tilde{\lambda}_i^k \to -\infty$, which yields
$(1/\sigma_{k-1})[\tilde{\lambda}_i^k]_+ = 0$ for sufficiently
large~$k$.  Moreover, $\tilde{\lambda_{i}^k}$ is bounded from
Assumption~\ref{assumption:function} (b) and
Lemma~\ref{lemma:cardinality} $(ii)$, and therefore $(1/\sigma_{k-1})
\tilde{\lambda}_i^k [\tilde{\lambda}_i^k]_+ \to 0$ in this case.\\

\noindent \textbf{Case (b)} Let $i \in \{1,\dots,m \}$ be such that
$\lambda_{i}(-X(x^{*})) = 0$.\\

\noindent Here, we consider once again two cases: (b1)\ there
exists a positive integer $\hat{k}$ such that $\lambda_{i}(-X(x_k))
\leq 0$ for all $k \geq \hat{k}$, and (b2)\ there exists a subset
$\mathcal{J} \subset \mathcal{M}$ such that
$\mathrm{card}(\mathcal{J}) = \infty$ and $\lambda_{i}(-X(x_k)) > 0$
for all $k \in \mathcal{J}$.\\

\noindent \noindent \textbf{Case (b1)} Assume that there exists a
positive integer $\hat{k}$ such that $\lambda_{i}(-X(x_k)) \leq 0$
for all $k \geq \hat{k}$.
To prove our claim, here we will show that
$(1/\sigma_{k-1})[\tilde{\lambda}_i^k]_+$ is bounded and
$\tilde{\lambda}_i^k \to 0$ when $k \to \infty$. In the following, let
$k \geq \hat{k}$.

Let us first show that $(1/\sigma_{k-1})[\tilde{\lambda}_i^k]_+$ is
bounded. Since $\sigma_{k-1} > 0$, we have
\begin{align*}
  0 \leq &
  \: (1/\sigma_{k-1})[\tilde{\lambda}_i^k]_+ \\
  = & \:
  \max \left\{0, \lambda_{i}\left(Z_{k-1} - \frac{1}{\sigma_{k-1}}X(x_k)\right)\right\} \\
  \leq & \:
  \max \left\{0, \lambda_{m}(Z_{k-1})
  + \lambda_{i}\left(- \frac{1}{\sigma_{k-1}}X(x_k)\right)\right\} \\
  \leq & \: \max \left\{0, \lambda_{m}(Z_{k-1})\right\}
  + \max \left\{0, \lambda_{i}\left(-\frac{1}{\sigma_{k-1}}X(x_k)\right)\right\},
\end{align*}
where the second inequality holds from the second inequality
of~\eqref{eq:eigenvalue}, and the last one follows from a property of
the $[\,\cdot\,]_+$ function. From Lemma
\ref{lemma:cardinality}~($ii$), the first term of the last expression
is bounded. Moreover, $\lambda_{i}\left(-X(x_k)/\sigma_{k-1}\right)
\le 0$ from assumption and because $\sigma_{k-1} > 0$, which means that
the second term of the last inequality is equal to $0$. Therefore,
$(1/\sigma_{k-1})[\tilde{\lambda}_i^k]_+$ is bounded as we claimed.
Now, from Lemma \ref{lemma:cardinality} $(ii)$--$(iii)$, we get
\[
\tilde{\lambda}_i^k = \lambda_{i}(\sigma_{k-1}Z_{k-1} - X(x_k)) \to
\lambda_{i}(-X(x^*)) = 0,
\]
and the proof is complete for this case.\\

\noindent \noindent \textbf{Case (b2)} Assume that there exists a
subset $\mathcal{J} \subset \mathcal{M}$ such that $\mathrm{card}(\mathcal{J})
= \infty$ and $\lambda_{i}(-X(x_k)) > 0$ for all $k \in \mathcal{J}$.
Now, take $k \in \mathcal{J}$. From the first inequality
of~\eqref{eq:eigenvalue}, we obtain
\[
\tilde{\lambda}_i^k = \lambda_{i}^k (\sigma_{k-1}Z_{k-1} - X(x_k)) \geq
\sigma_{k-1}\lambda_{1}(Z_{k-1}) + \lambda_{i}(-X(x_k)).
\]
By assumption, we know $\lambda_{i}(-X(x_k)) > 0$. Moreover, from Step
$3$ of Procedure~\ref{algorithm:procedure}, $Z_{k-1}$ is a positive
semidefinite matrix and thus $\lambda_{1}(Z_{k-1}) \geq 0$.
Therefore, the above inequality shows that $\tilde{\lambda}_i^k >
0$. Now, recalling that $\sigma_{k-1} > 0$ and
from~\eqref{eq:eigenvalue},
\begin{equation}
  \label{eq:inequality_of_eigenvalue2}
  \displaystyle{\lambda_{1}(Z_{k-1})
    + \frac{1}{\sigma_{k-1}} \lambda_{i}\left(-X(x_k)\right)}
  \leq \displaystyle{\frac{1}{\sigma_{k-1}}\tilde{\lambda}_i^k}
  \leq \displaystyle{\lambda_{m}(Z_{k-1})
    + \frac{1}{\sigma_{k-1}} \lambda_{i}\left(-X(x_k)\right)}
\end{equation}
holds. Multiplying it by the positive term $\tilde{\lambda}_i^k$, and
observing that $[\tilde{\lambda}_i^k]_+ = \tilde{\lambda}_i^k$, we have
\begin{align}
  \lambda_{1}(Z_{k-1}) \tilde{\lambda}_i^k
  + \frac{1}{\sigma_{k-1}} \lambda_{i}\left(-X(x_k)\right) \tilde{\lambda}_i^k 
  & \leq \frac{1}{\sigma_{k-1}} \tilde{\lambda}_i^k [\tilde{\lambda}_i^k]_+
  \nonumber \\
  & \leq \lambda_{m}(Z_{k-1})\tilde{\lambda}_i^k
  + \frac{1}{\sigma_{k-1}} \lambda_{i}\left(-X(x_k)\right) \tilde{\lambda}_i^k.
  \label{eq:case_b2_ineq}
\end{align}
In addition, by Lemma \ref{lemma:cardinality} ($ii$)--($iii$) and the
fact that $\lambda_{i}(-X(x_k)) \to \lambda_{i}(-X(x^{*})) = 0$ when
$\mathcal{J} \ni k \to \infty$, we obtain $\tilde{\lambda}_i^k \to
0$. Moreover, $\lambda_{1}(Z_{k-1})\tilde{\lambda}_i^k \to 0$ and
$\lambda_{m}(Z_{k-1})\tilde{\lambda}_i^k \to 0$ hold. These limits,
together with inequalities~\eqref{eq:case_b2_ineq}, show that
$(1/\sigma_{k-1})\tilde{\lambda}_i^k [\tilde{\lambda}_i^k]_+ \to 0$
holds when
\begin{equation}
  \label{eq:case_b2_essential1}
  \frac{1}{\sigma_{k-1}}\lambda_{i}\left(-X(x_k)\right)\tilde{\lambda}_i^k \to 0.
\end{equation}
  
To prove this limit, note that $\lambda_i(-X(x_k)) > 0$ holds by
assumption, and so we can multiply (\ref{eq:inequality_of_eigenvalue2})
by it to obtain
\begin{align*}
  \lambda_{1}(Z_{k-1})\lambda_{i}(-X(x_k))
  + \frac{1}{\sigma_{k-1}} \lambda_{i}\left(-X(x_k)\right)^2
  & \leq \frac{1}{\sigma_{k-1}} \lambda_{i}(-X(x_k)) \tilde{\lambda}_{i}^k\\
  & \leq \lambda_{m}(Z_{k-1})\lambda_{i}(-X(x_k))
  + \frac{1}{\sigma_{k-1}} \lambda_{i}\left(-X(x_k)\right)^2.
\end{align*}
Once again from Lemma \ref{lemma:cardinality} ($ii$) and the fact that
$\lambda_{i}(-X(x_k)) \to 0$ when $\mathcal{J} \ni k \to \infty$, show
that $\lambda_{1}(Z_{k-1})\lambda_{i}(-X(x_k)) \to 0$ and
$\lambda_{m}(Z_{k-1})\lambda_{i}(-X(x_k)) \to 0$. These limits,
together with the above inequalities show that in order to prove
\eqref{eq:case_b2_essential1}, it becomes sufficient to prove
\begin{equation}
  \label{eq:case_b2_essential2}
  \frac{1}{\sigma_{k-1}} \lambda_{i}(-X(x_k))^2 \to 0.
\end{equation}

Let us now prove~\eqref{eq:case_b2_essential2}. Since $x_k \to x^*$
when $k \to \infty$, there exists $\delta > 0$ and a positive integer
$\overline{k}$ such that $x_{k} \in B(x^{*}, \delta)$ for all $k \geq
\overline{k}$. In the following, we suppose $\mathcal{J} \ni k \geq
\overline{k}$.  Regarding $P$ defined in~\eqref{eq:feasib}, we have
$P(x^{*}) = 0$ because $x^{*}$ is a feasible point of (\ref{eq:nsdp}).
Thus, from Assumption~\ref{assumption:function}~(d),
$|P(x^k)-P(x^{*})| \leq \varphi(x^k)\norm{\grad P(x^k)}$ holds, and
therefore
\[
  0 \leq \frac{1}{\sigma_{k-1}}|P(x_k)|
  = \frac{1}{\sigma_{k-1}}|P(x_k)-P(x^{*})|
  \leq \frac{\varphi(x_k)}{\sigma_{k-1}}\norm{\grad P(x_k)}.
\]
From Lemma~\ref{lemma:convergence} ($ii$), $\left\|\grad
P(x_k)/\sigma_{k-1}\right\|$ is bounded. Moreover, since $\varphi(x_k)
\to \varphi(x^{*}) = 0$, the above inequality shows that
\[
\frac{1}{\sigma_{k-1}}|P(x_k)|
= \frac{1}{2\sigma_{k-1}}\left(\norm{g(x_k)}^2 +
\norm{\left[-X(x_k)\right]_{+}}_{\mathrm{F}}^2\right) \to 0.
\]
In addition, because $\norm{g(x_k)}$ and
$\norm{\left[-X(x_k)\right]_{+}}_{\mathrm{F}}$ are both nonnegative,
we get the limit
$(1/\sigma_{k-1})\norm{\left[-X(x_k)\right]_{+}}_{\mathrm{F}}^2 \to
0$. Since
\[
\frac{1}{\sigma_{k-1}}\sum^{m}_{j=1}\lambda_{j}(\left[-X(x_k)\right]_{+})^2
= \frac{1}{\sigma_{k-1}}\norm{\left[-X(x_k)\right]_{+}}_{\mathrm{F}}^2
\to 0
\]
holds, we conclude that
$\lambda_{j}(\left[-X(x_k)\right]_{+})^2/\sigma_{k-1} \to 0$ for $j =
1, \dots, m$. Therefore, because $\lambda_{i}(-X(x_k)) > 0$ from
assumption, \eqref{eq:case_b2_essential2} holds. We then conclude that
\eqref{eq:case_b2_essential1} is also true, which in turn shows the
lemma's claim.\qed \\

\noindent \textbf{Proof of Lemma~\ref{lemma:convergence} (ii):}\\

\noindent Let us prove that $\left\|\grad P(x_k)/\sigma_{k-1}\right\|$
is bounded when $k \in \mathcal{M}$. Fixing $k \in \mathcal{M}$,
clearly,
\[
\grad P(x_k) = \grad g(x_k)g(x_k) - \adjoint{x_k}\left[-X(x_k)\right]_{+}
\]
by the definition of~$P$ in~\eqref{eq:feasib}. Then, simple
calculations show that
\begin{align}
  \left\|\frac{\grad P(x_k)}{\sigma_{k-1}}\right\| \;
  = & \;\; \frac{1}{\sigma_{k-1}} \Big\| \grad g(x_k)g(x_k)
    - \adjoint{x_k} \Big( \left[\sigma_{k-1}Z_{k-1} - X(x_k)\right]_{+} 
    \nonumber \\
    & \;\;{} - \left[\sigma_{k-1}Z_{k-1} - X(x_k)\right]_{+}
    + \left[-X(x_k)\right]_{+} \Big) \Big\|
  \nonumber \\
  \leq & \;\; \frac{1}{\sigma_{k-1}} \big\| \grad g(x_k)g(x_k)
    - \adjoint{x_k}\left[\sigma_{k-1}Z_{k-1} - X(x_k)\right]_{+} \big\|
  \nonumber \\
  & \;\;{} + \frac{1}{\sigma_{k-1}} \big\| \adjoint{x_k} \big(
  \left[-X(x_k)\right]_{+} - \left[\sigma_{k-1}Z_{k-1} - X(x_k)\right]_{+}
  \big) \big\|.
  \label{eq:inequality_of_gradient_p}		
\end{align}
Moreover, the first term of the right-hand side of the above expression can be
written as follows:
\begin{align}
  & \;\; \bigg\| \frac{1}{\sigma_{k-1}} \Big( \grad g(x_k)g(x_k)
    - \adjoint{x_k}\left[\sigma_{k-1}Z_{k-1} - X(x_k)\right]_{+} \Big) \bigg\|
   \label{eq:inequality_to_proof_bounded_1} \\
  = & \;\; \left\| \big( \grad f(x_k) - \grad g(x_k)y_{k-1} \big)
  + \frac{1}{\sigma_{k-1}} \grad g(x_k)g(x_k) \right.
  \nonumber \\
  & \;\;{}   
  - \left. \adjoint{x_k}\left[ Z_{k-1} - \frac{X(x_k)}{\sigma_{k-1}} \right]_{+} 
  - \big(\grad f(x_k) - \grad g(x_k)y_{k-1} \big) \right\|
  \nonumber \\
  \leq & \;\; \norm{\grad F(x_k; \sigma_{k-1}, y_{k-1}, Z_{k-1})}
  + \norm{\grad f(x_k) - \grad g(x_k)y_{k-1}}
  \nonumber \\
  \leq & \;\; \gamma_{k-1} + \norm{\grad f(x_k) - \grad g(x_k)y_{k-1}},
  \nonumber
\end{align}
where the first inequality holds from~\eqref{eq:grad_merit_function},
and the second one follows because the iterate $k \in \mathcal{M}$
means $(k-1) \in \mathcal{K}_M$, so the condition in Step~3 of
Procedure~\ref{algorithm:procedure} is true. Once again from the
Step~3 of Procedure~\ref{algorithm:procedure}, we know that $y_{k-1}$
is bounded. With Assumption~\ref{assumption:function}~(b), this
implies the boundedness of $\norm{\grad f(x_k) - \grad
  g(x_k)y_{k-1}}$. Thus, the term
in~\eqref{eq:inequality_to_proof_bounded_1} is also bounded.

Let us now analyze the last term of the
expression~\eqref{eq:inequality_of_gradient_p}. Recall that
\[
\adjoint{x}U :=
\left[
  \begin{array}{c}
    \inner{A_{1}(x)}{U} \\
    \vdots \\
    \inner{A_{n}(x)}{U}
  \end{array}
\right] =
\left[
  \begin{array}{c}
    \mathrm{svec}(A_{1}(x))^\T \mathrm{svec}(U) \\
    \vdots \\
    \mathrm{svec}(A_{n}(x))^\T \mathrm{svec}(U)
  \end{array}
\right] =
\left[
  \begin{array}{c}
    \mathrm{svec}(A_{1}(x))^\T \\
    \vdots \\
    \mathrm{svec}(A_{n}(x))^\T 
  \end{array}
\right] \mathrm{svec}(U)
\]
for any $x \in \Re^n$ and $U \in \S^d$, where the second equality
holds from~\eqref{eq:svec}. Using the above formulation, we have:
\begin{align}
  & \;\; \frac{1}{\sigma_{k-1}} \big\| \adjoint{x_k} \big( \left[-X(x_k)\right]_{+}
  - \left[\sigma_{k-1}Z_{k-1} - X(x_k)\right]_{+} \big) \big\|
  \nonumber \\
  = & \;\; \frac{1}{\sigma_{k-1}}
  \left\|
  \left[
    \begin{array}{c}
      \mathrm{svec}(A_1(x_k))^{\T} \\
      \vdots \\
      \mathrm{svec}(A_n(x_k))^{\T}
    \end{array}
    \right]
  \mathrm{svec} \big(\left[-X(x_k)\right]_{+}
  - \left[\sigma_{k-1}Z_{k-1} - X(x_k)\right]_{+} \big)
  \right\|
  \nonumber \\
  \leq & \;\; \frac{1}{\sigma_{k-1}}
  \left\|
  \left[
    \begin{array}{c}
      \mathrm{svec}(A_1(x_k))^{\T} \\
      \vdots \\
      \mathrm{svec}(A_n(x_k))^{\T}
    \end{array}
    \right]
  \right\| \norm{\mathrm{svec}(\left[-X(x_k)\right]_{+}
    - \left[\sigma_{k-1}Z_{k-1} - X(x_k)\right]_{+})}.
  \label{eq:appendix_2nd_bound}
\end{align}
Now, from~\eqref{eq:svec}, the last norm is bounded as follows:
\begin{align*}
  & \;\; \norm{\mathrm{svec}(\left[-X(x_k)\right]_{+}
    - \left[\sigma_{k-1}Z_{k-1} - X(x_k)\right]_{+})} \\
  = & \;\; \norm{\left[-X(x_k)\right]_{+}
    - \left[\sigma_{k-1}Z_{k-1} - X(x_k)\right]_{+}}_{\mathrm{F}}
  \nonumber \\
  \leq & \;\; \norm{\sigma_{k-1}Z_{k-1}}_{\mathrm{F}}
  = \sigma_{k-1} \norm{Z_{k-1}}_{\mathrm{F}},
\end{align*}
where the inequality follows from the expansion property of
projections. The above inequality, together with Assumption
\ref{assumption:function} (b) and Lemma \ref{lemma:cardinality}
($ii$), shows that~\eqref{eq:appendix_2nd_bound} is also
bounded. Therefore, from (\ref{eq:inequality_of_gradient_p}) the claim
holds.\qed

\end{document}